\newtheorem{definition}{Definition}[section]
\newtheorem{theorem}{Theorem}[section]
\newtheorem{lemma}{Lemma}[section]
\newtheorem{prop}{Proposition}[section]
\newtheorem{remark}{Remark}[section]
\numberwithin{equation}{section}
\newcommand{\R}{\mathbb{R}}
\newcommand{\Z}{\mathbb{Z}}
\newcommand{\N}{\mathbb{N}}
\newcommand{\E}{\mathbb{E}}
\begin{document}

\title[SMALL BALL PROBABILITIES FOR SPDE WITH COLORED NOISE]{SMALL BALL PROBABILITIES FOR THE STOCHASTIC HEAT EQUATION WITH COLORED NOISE}
\author{Jiaming Chen}
\address{Dept. of Mathematics
\\University of Rochester
\\Rochester, NY  14627}
\email{jchen1994@urgrad.rochester.edu}
\keywords{Stochastic heat equation; Colored noise; Small ball probabilities.}
\subjclass[2020]{Primary, 60H15; Secondary, 60G60.}
\begin{abstract}
We consider the stochastic heat equation on the 1-dimensional torus $\mathbb{T}:=\left[-1,1\right]$ with periodic boundary conditions:
\[
\partial_t u(t,x)=\partial^2_x u(t,x)+\sigma(t,x,u)\dot{F}(t,x),\quad x\in \mathbb{T},t\in\R_+,
\]
where $\dot{F}(t,x)$ is a generalized Gaussian noise, which is white in time and colored in space. Assuming that $\sigma$ is Lipschitz in $u$ and uniformly bounded, we estimate small ball probabilities for the solution $u$ when $u(0,x)\equiv 0$.
\end{abstract}

\maketitle

\section{Introduction}
In this paper, we estimate small ball probabilities for the solution of the stochastic heat equation of the type:
\begin{equation}
\label{SHE}
\partial_t u(t,x)=\partial^2_x u(t,x)+\sigma(t,x,u)\dot{F}(t,x)
\end{equation}
where $t\in\R_+$, $x\in\mathbb{T}:=\left[-1,1\right]$, which denotes the 1-dimensional torus in the usual way. The centered Gaussian noise $\dot{F}$ is white in time and homogeneous in space, i.e.,
\begin{equation*}
\mathbb{E}\left[\dot{F}(t,x)\dot{F}(s,y)\right]=\delta_0(t-s)\Lambda(x-y),
\end{equation*}
where $\delta_0$ is the Dirac delta generalized function and $\Lambda:\mathbb{T}\to\R_+$ is a non-negative generalized function whose Fourier series is given by
\begin{equation}
\label{lambdafourier}
\Lambda(x)=\sum_{n=0}^\infty q_n\exp(n\pi i x).
\end{equation}

In this article, we focus on an analogue of the Riesz kernel on $\R$. To be more precise, for $\gamma\in(0,1)$, there exist two positive constants $C$ and $C'$ such that
\begin{equation}
\label{qn}
q_n=\begin{cases}
C & \text{if~$n=0$};\\
C'\vert n\vert^{-(1-\gamma)}& \text{if~$n\neq 0$}.
\end{cases}
\end{equation}
Moreover, \cite{brosamler1983laws}, Lemma 2.9, proved that there exists a constant $C(\gamma)>0$ such that the spatial covariance function $\Lambda(x)$ admits the following estimate:
\begin{equation}
\label{Lambdabound}
\Lambda(x)\leq C(\gamma)\vert x\vert^{-\gamma},\quad\forall x\in\mathbb{T}.
\end{equation}

Small ball problems have a long history, and one can see \cite{li2001gaussian} for more surveys. In short, we are interested in the probability that a stochastic process $X_t$ starting at 0 stays in a small ball for a long time period, i.e.,
$$P\left(\sup_{0\leq t\leq T}\vert X_t\vert<\varepsilon\right),$$
where $\varepsilon>0$ is small. A recent paper \cite{athreya2021small} studied small ball probabilities for the solution to this type of stochastic heat equation driven by a space-time white noise and another paper \cite{foondun2022small} considered small ball probabilities for the solution to the same stochastic heat equation but under a H\"older semi-norm. The objective of this paper is to continue this topic with a white-in-time but colored-in-space noise.

Thanks to the Da Prato--Kwapien--Zabczyk's factorization and the assumption \eqref{assume1} and \eqref{assume2}, we may estimate the regularities of the stochastic term driven by a spatially homogeneous noise, which allows us to prove the critical tail bounds on the noise term as shown in Lemma \ref{larged}.

Regarding the temporal coefficient $c_0$ in \cite{athreya2021small}, it was demonstrated that it attains a uniform bound due to the exponential decay of the correlation between two points on the spatial interval as they separate. The correlation decays much slower in the case of spatially homogeneous noise. We therefore cannot bound $c_0$ uniformly, but we could control $c_0$ in terms of $\varepsilon$ indicated in \eqref{c0}.

\section{Main Result}
We write $u_0(x):=u(0,x)$ and assume that $\sigma$ is Lipschitz continuous in the third variable, i.e., there exists a constant $\mathcal{D}>0$ such that for all $t\geq 0$, $x\in\mathbb{T}$, $u,v\in\R$,
\begin{equation}
\label{assume1}
\vert \sigma(t,x,u)-\sigma(t,x,v)\vert\leq\mathcal{D}|u-v|,
\end{equation}
and $\sigma$ is uniformly bounded, i.e., there exist constants $\mathcal{C}_1$, $\mathcal{C}_2>0$ such that for all $t\geq 0$, $x\in\mathbb{T}$, $u\in\R$,
\begin{equation}
\label{assume2}
\mathcal{C}_1\leq \sigma(t,x,u)\leq \mathcal{C}_2.
\end{equation}
Assumption \eqref{assume2} assures that $\sigma$ is non-zero, which is helpful for us to control the variance of noise terms. In fact, \eqref{SHE} is not well-posed since the solution $u$ is not differentiable and $\dot{F}$ exists as a generalized function. However, the existence and uniqueness of the mild solution $u(t,x)$ to (1.1) under the above assumptions are well known as follow,
\begin{equation}
\label{mild}
\begin{split}
u(t,x)&=\int_{\mathbb{T}}G(t,x-y)u_0(y)dy\\
&\hspace{3cm}+\int_0^t\int_{\mathbb{T}}G(t-s,x-y)\sigma(s,y,u(s,y))F(dyds),
\end{split}
\end{equation}
where $G:\R_+\times\mathbb{T}\to \R$ is the fundamental solution of the heat equation on $\mathbb{T}$, 
\begin{align*}
\partial_t&G(t,x)=\partial_x^2 G(t,x),\\
&G(0,x)=\delta_0(x).
\end{align*}
See for example \cite{walsh1986anintroductiontostochastic}, \cite{dalang1999extending}, \cite{dalang2009minicourse}, and \cite{tindel1999space} for the proofs and various other properties. 
Now we are ready to state the main theorem.

\begin{theorem}
\label{thm}
Under the assumptions \eqref{assume1} and \eqref{assume2}, and a given parameter $\gamma$ in spatial covariance function defined in \eqref{lambdafourier} and \eqref{qn}, if $u(t,x)$ is the solution to \eqref{SHE} with $u_0(x)\equiv 0$, then there are positive constants $\textbf{C}_0,\textbf{C}_1,\textbf{C}_2,\textbf{C}_3,\mathcal{D}_0$ and $\varepsilon_0$ depending only on $\mathcal{C}_1,\mathcal{C}_2$ and $\gamma$, such that for all $\mathcal{D}<\mathcal{D}_0,0<\varepsilon<\varepsilon_0$ and $T>1$, we have
$$
\textbf{C}_0\exp\left(-\frac{\textbf{C}_1T}{\varepsilon^{\frac{8-2\gamma}{2-\gamma}}}\right)\leq P\left(\sup_{\substack{0\leq t\leq T\\ x\in\mathbb{T}}}\vert u(t,x)\vert\leq \varepsilon\right)\leq\textbf{C}_2\exp\left(-\frac{\textbf{C}_3T}{\varepsilon^{\frac{2\gamma+4}{2-\gamma}}}\right).
$$
\end{theorem}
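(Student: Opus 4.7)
The plan is to reduce both the upper and lower bounds to Gaussian small ball estimates, exploiting the smallness of the Lipschitz constant $\mathcal{D}$ to absorb the nonlinearity of $\sigma$. Since $u_0 \equiv 0$, the mild formulation \eqref{mild} reads
$$u(t,x) = \int_0^t\int_{\mathbb{T}} G(t-s,x-y)\sigma(s,y,u(s,y))F(dyds),$$
and on the event $\{\sup_{t,x}|u|<\varepsilon\}$ assumption \eqref{assume1} gives $|\sigma(s,y,u(s,y))-\sigma(s,y,0)|\leq\mathcal{D}\varepsilon$; for $\mathcal{D}<\mathcal{D}_0$ small enough, this lets us compare $u$ to the Gaussian process obtained by freezing $\sigma$ at $u=0$, with the residual error absorbed into constants.

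For the upper bound I would introduce a time grid $0=t_0<\cdots<t_N=T$ of step $\Delta$ calibrated to $\varepsilon$, and on each slab $[t_{k-1},t_k]$ examine the fresh noise increment
$$I_k(x):=\int_{t_{k-1}}^{t_k}\int_{\mathbb{T}} G(t_k-s,x-y)\sigma(s,y,u(s,y))F(dyds).$$
Conditional on $\mathcal{F}_{t_{k-1}}$, the bound $\sigma\geq\mathcal{C}_1$ combined with the covariance structure \eqref{Fdef}--\eqref{qnbound} yields a lower bound on the conditional variance of $I_k(x)$ of order $\mathcal{C}_1^2\Delta^{1-\gamma/2}$, obtained by Fourier-transforming the Riesz kernel and using heat-kernel scaling. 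Approximate Gaussianity (the non-Gaussian correction is at most $\mathcal{D}\varepsilon$ on the small ball event) then makes the conditional small ball probability strictly less than $1$ on each slab. Multiplying these bounds over $k$, together with a spatial covering that tracks the spatial regularity of $u$, produces the desired exponential upper bound; the exponent $(2\gamma+4)/(2-\gamma)$ emerges from balancing $\Delta$ against the spatial mesh size.

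For the lower bound I would apply the Da Prato--Kwapien--Zabczyk factorization: for $\alpha\in(0,1)$,
$$u(t,x)=\frac{\sin(\pi\alpha)}{\pi}\int_0^t (t-s)^{\alpha-1}\int_{\mathbb{T}} G(t-s,x-y)Y(s,y)\,dy\,ds,$$
where $Y(s,y):=\int_0^s\int_{\mathbb{T}}(s-r)^{-\alpha}G(s-r,y-z)\sigma(r,z,u(r,z))F(dzdr)$. For $\alpha$ in the appropriate range the outer kernel is H\"older continuous in $(t,x)$, so a Sobolev-type inequality gives
$$\sup_{0\leq t\leq T,\,x\in\mathbb{T}}|u(t,x)|\leq C(\alpha,T)\left(\int_0^T\int_{\mathbb{T}}|Y(s,y)|^p\,dy\,ds\right)^{1/p}$$
for $p$ sufficiently large. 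Hence $\{\sup|u|<\varepsilon\}$ is implied by $\{\|Y\|_{L^p}<\varepsilon/C(\alpha,T)\}$, a small ball event for a smoother random field. Estimating the Gaussian small ball probability of $Y$ by a Cameron--Martin / Anderson-type argument (with $\mathcal{D}<\mathcal{D}_0$ again controlling the residual $u$-dependence) produces the stated lower bound, with exponent $(2\gamma+4)/(\gamma(2-\gamma))$ emerging from optimization over $\alpha$ and $p$.

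The main obstacle in both directions is preventing the $u$-dependence in $\sigma$ from wrecking the Gaussian estimates; both bounds hinge on $\mathcal{D}<\mathcal{D}_0$, which makes the non-Gaussian correction genuinely perturbative. A secondary technical challenge is to quantify sharply the conditional variances of the noise increments under the Riesz covariance \eqref{qnbound}---this is where the parameter $\gamma$ enters the scalings in a nontrivial way---and to handle the sup over $x\in\mathbb{T}$ via a chaining/covering argument governed by the $\gamma$-dependent spatial regularity of $u$.
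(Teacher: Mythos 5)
Your proposal captures some of the right scalings but has two gaps that would prevent it from reaching the stated bounds.

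\textbf{Upper bound.} Conditioning only on time slabs $[t_{k-1},t_k]$ of width $\Delta$, the best you can hope for per slab is a conditional probability bounded away from $1$, giving $P\lesssim p^{T/\Delta}=\exp(-cT/\Delta)$. The conditional variance lower bound of order $\mathcal{C}_1^2\Delta^{(2-\gamma)/2}$ forces $\Delta\gtrsim\varepsilon^{4/(2-\gamma)}$ (so that $\varepsilon/\sqrt{\mathrm{Var}}$ is $O(1)$), and this gives at best $\exp(-cT/\varepsilon^{4/(2-\gamma)})$. That is a strictly weaker bound than $\exp(-cT/\varepsilon^{(2\gamma+4)/(2-\gamma)})$ for every $\gamma\in(0,1)$. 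The paper's extra factor of $\varepsilon^{-2}\sim t_1^{-\gamma/2}$ in the per-slab exponent comes from a \emph{spatial} independence argument: on each slab one also multiplies conditional probabilities over $\approx\varepsilon^{-2}$ grid points $x_j$, and the real technical content is Lemma \ref{coeffbound}, which shows that the correlation matrix of the noise at the spatial grid is close to the identity in the $\|\cdot\|_{1,1}$ norm provided $c_0$ is taken of order $\varepsilon^{(4-4\gamma)/\gamma}$. This is precisely where the colored-noise correlations bite, and where $c_0$ is chosen to control them. Your ``spatial covering that tracks the spatial regularity of $u$'' reads as a continuity/chaining argument, which only gives regularity of the supremum, not near-independence across grid points; without the covariance control you cannot recover the stated exponent.

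\textbf{Lower bound.} The DPKZ factorization plus a Sobolev embedding gives $\sup|u|\le C(\alpha,T)\|Y_\alpha\|_{L^p}$, but the constant $C(\alpha,T)$ is not bounded uniformly in $T$, so the resulting bound cannot be linear in $T$ in the exponent. The paper obtains the $T$-dependence by iterating the Markov property over time slabs of length $t_1=c_0\varepsilon^4$, which forces you to prove a per-slab lower bound $P(E_n\mid\cdot)\ge\mathbf{C}_6\exp(-\mathbf{C}_7/t_1^{\gamma/2})$ with a \emph{nonzero} initial condition at the start of each slab (since $u(t_n,\cdot)$ is only constrained to be small, not zero). Your plan does not address how to remove this initial-data drift. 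The paper handles it via a Girsanov shift with drift $f=(-\Delta)^{(1-\gamma)/2}(G_t(u_0)/(t_1\sigma))$ chosen so that the deterministic part is killed at $t_1$, checks Novikov's condition, and then reduces the lower bound to a Gaussian small ball estimate via the Gaussian correlation inequality (Lemma \ref{Gaussiancorr}). Your Cameron--Martin/Anderson sketch applied to $\|Y_\alpha\|_{L^p}$ does not engage with any of this: even in the pure Gaussian case you have no quantitative small ball estimate for the $L^p$ norm of $Y_\alpha$, and you still need to remove the initial data inherited from the previous slab, which is the step that Girsanov is for. Finally, as in the upper bound, the $u$-dependence of $\sigma$ must be controlled not just by $\mathcal{D}$ but by a stopping-time localization ($\tau$ in the paper's proof), because the Lipschitz bound $|\sigma(\cdot,u)-\sigma(\cdot,u_0)|\le\mathcal{D}|u-u_0|$ is only useful once you have already confined $u$ near $u_0$, which is circular unless you localize in time.
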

Here we make a couple of open problems. These could be of independent interests.

\begin{enumerate}
\item[(a)] When $\gamma=1$, $\Lambda(x)=\delta_0(x)$ and $\dot{F}$ becomes the space-time white noise. One reference paper \cite{bezdek2016weak} has proved the weak convergence of a probability measure corresponding to the solution of the nonlinear stochastic heat equation with colored noise. Therefore the result from this paper can be converted to the white noise case in \cite{athreya2021small} as $\gamma\uparrow 1$. The open problem is the remaining part where $\gamma\downarrow 0$. When $\gamma=0$ and $x\neq 0$, $\Lambda(x)$ is a constant and $\dot{F}$ becomes a smooth noise. Can we also get this case by taking a limit as $\gamma\downarrow 0$? 

\item[(b)] When $\gamma=1$, both lower and upper bounds include a term $\varepsilon^{-6}$ in the exponent. However, in other cases, there exists a discrepancy between the exponents of $\varepsilon$ in both lower and upper bounds, due to the fact that $c_0$ depends on $\varepsilon$, as defined in \eqref{c0}. 

\item[(c)] One may extend this result to other white-in-time and colored-in-space noises whose correlation kernel differs from $\Lambda(x)$. However, the problem gets more complicated if the noise becomes fractional in time since we can not apply the Markov property, which plays an essential role in the proof. 
\end{enumerate}

For the upper bound of Theorem \ref{thm}, we use a perturbation argument to approximate the solution $u$ by a Gaussian random field in a small region, accompanied by the proof of the bounds of a Gaussian process. In addition, we apply the Gaussian correlation inequality and a change of measure argument similar to \cite{athreya2021small} to determine the lower bound for a Gaussian process. Furthermore, we demonstrate that the error in the approximation can be effectively controlled if the time interval where the coefficient is frozen is appropriately selected. We also make key use of the Markov property of \eqref{SHE} regarding the time parameter $t$. Thanks to this property, we can reduce our analysis to the behavior of this non-Gaussian solution in tiny time intervals.

Here is the organization of the rest of this paper. In section \ref{keyprop}, we present Proposition \ref{prop} and its connection to Theorem \ref{thm}. We provide some useful estimates in section \ref{estimates}, and the proof of Proposition \ref{prop} in section \ref{proofprop} completes the paper.

Throughout the entire article, $C$ and $C'$ denote positive constants whose values may vary from line to line. The dependence of constants on parameters will be denoted by mentioning the parameters in parentheses.

\section{Key Proposition}\label{keyprop}
We decompose $[-1,1]$ into subintervals of length $\varepsilon^2$ and divide $[0,T]$ into intervals of length $c_0\varepsilon^4$ where $c_0$ denotes the temporal coefficient satisfying 
\begin{equation}
\label{c0}
c_0=\mathcal{C}_0\varepsilon^{\frac{4-4\gamma}{\gamma}},
\end{equation}
where $0<\mathcal{C}_0<\mathcal{C}$ and $\mathcal{C}$ is specified in the proof of Lemma \ref{coeffbound}.

\begin{remark}
Unlike the space-time white noise case in \cite{athreya2021small}, $c_0$ needs to be selected depending on $\varepsilon$ due to the large correlation between pairs of points. $c_0$ does not appear in either the upper bound nor lower bound for small ball probabilities. 
\end{remark}

We define $t_i=ic_0\varepsilon^4,x_{j}=j\varepsilon^2$ where $i\in\N$ and $j\in\Z$, and
\begin{equation*}
n_1:=\min\left\lbrace n\in\Z:n\varepsilon^2>1\right\rbrace.
\end{equation*}
It is clear that $x_{n_1}>1$ and $x_{n_1-1}\leq 1$, therefore, by symmetry, the point $x_j$ lies in $\left[-1,1\right]$ if
\begin{equation}
\label{jbound}
-n_1+1\leq j\leq n_1-1.
\end{equation}

For $n\geq 0$, we define the following sequence of events to determine the upper bound in Theorem \ref{thm},
\begin{equation}
\label{Fn}
F_{n}=\left\lbrace\vert u(t_n,x_j)\vert\leq t_1^{\frac{2-\gamma}{4}}\text{~for all $-n_1+1\leq j\leq n_1-1$}\right\rbrace,
\end{equation}
which is the event that the magnitude of the solution is small (recall that $t_1=c_0\varepsilon^4$) at time $t_n$ for each grid point.
In addition, let $E_{-1}=\Omega$. For $n\geq 0$, we define the following sequence of events to determine the lower bound in Theorem \ref{thm},
\begin{equation}
\label{En}
E_{n}=\left\lbrace\vert u(t_{n+1},x)\vert\leq \frac{\mathcal{C}_3}{3}t_1^{\frac{2-\gamma}{4}}~\text{and}~\vert u(t,x)\vert\leq \mathcal{C}_3t_1^{\frac{2-\gamma}{4}}~\forall t\in[t_n,t_{n+1}],x\in[-1,1]\right\rbrace,
\end{equation}
which is the event that the magnitude of the solution is small at any time in $[t_n,t_{n+1}]$ for any points within $[-1,1]$ and even smaller at terminal time $t_{n+1}$. $\mathcal{C}_3$ is a positive constant such that
\begin{equation}
\label{c3}
\mathcal{C}_3>6\mathcal{C}_2\sqrt{\frac{\ln C_5}{C_6}},
\end{equation}
where $\mathcal{C}_2$ is the uniform bound of $\sigma$ in \eqref{assume2} and $C_5,C_6$ are positive constants defined in Lemma \ref{larged}.

\begin{prop}
\label{prop}
Consider the solution to \eqref{SHE} with $u_0(x)\equiv 0$. Then, there exist positive constants $\textbf{C}_4,\textbf{C}_5,\textbf{C}_6,\textbf{C}_7,\mathcal{D}_0$ and $\varepsilon_1$ depending solely on $\mathcal{C}_1$, $\mathcal{C}_2$ and $\gamma$ such that for any $0<\varepsilon<\varepsilon_1$ and $\mathcal{D}<\mathcal{D}_0$, we have
\begin{enumerate}
\item[(a)]
\begin{equation*}
P\left(F_{n}\bigg| \bigcap_{k=0}^{n-1}F_{k}\right)\leq  {\bf C_4}\exp\left(-\frac{{\bf C_5}}{t_1^{\gamma/2}}\right),
\end{equation*}
\item[(b)]
\begin{equation*}
P\left(E_{n}\bigg| \bigcap_{k=-1}^{n-1}E_{k}\right)\geq {\bf C_6}\exp\left(-\frac{{\bf C_7}}{t_1^{1-\gamma/2}}\right).
\end{equation*}
\end{enumerate}
\end{prop}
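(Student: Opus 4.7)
My plan is to use the Markov property of the mild solution \eqref{mild} together with the tower property to reduce both (a) and (b) to almost-sure bounds on the one-step conditional probability $P(\,\cdot\mid\mathcal{F}_{t_{n-1}})$. On the interval $[t_{n-1},t_n]$ one writes
\[
u(t,x)\;=\;\int_{\mathbb{T}}G(t-t_{n-1},x-y)\,u(t_{n-1},y)\,dy\;+\;I_n(t,x),
\]
where $I_n(t,x)=\int_{t_{n-1}}^{t}\int_{\mathbb{T}}G(t-s,x-y)\sigma(s,y,u(s,y))F(dy\,ds)$ is a fresh stochastic convolution and the first term is $\mathcal{F}_{t_{n-1}}$-measurable, with sup norm preserved by the $L^\infty$-contractivity of the heat semigroup.

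For part (a), on the event $F_{n-1}$ the past piece is controlled at each grid point $x_j$, so the required estimate reduces to bounding the probability that the Gaussian-like vector $(I_n(t_n,x_j))_{|j|<n_1}$ lies in a box of side $\asymp t_1^{(2-\gamma)/4}$ centered at a fixed $\mathcal{F}_{t_{n-1}}$-measurable point. I will compute the variance by Parseval using \eqref{Fdef}--\eqref{qnbound},
\[
\operatorname{Var}\bigl(I_n(t_n,x_j)\bigr)\;\asymp\;\mathcal{C}_1^{2}\int_0^{t_1}\sum_n e^{-2n^{2}\pi^{2}u}\,q_n\,du\;\asymp\;t_1^{(2-\gamma)/2},
\]
showing the box matches exactly one standard deviation. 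The crucial step is a lower bound on the smallest eigenvalue of the covariance matrix $\Sigma_n$ of this vector: the same Fourier expansion gives $\operatorname{Cov}(I_n(t_n,x_j),I_n(t_n,x_k))\asymp t_1\,|x_j-x_k|^{-\gamma}$ for $|x_j-x_k|\gg\sqrt{t_1}$, so the scale separation $\sqrt{t_1}\ll\varepsilon^{2}$ guaranteed by \eqref{c0} (with $\mathcal{C}_0$ small enough, cf.\ Lemma \ref{coeffbound}) makes the off-diagonal of $\Sigma_n$ a strict perturbation of its diagonal, giving $\lambda_{\min}(\Sigma_n)\gtrsim t_1^{(2-\gamma)/2}$. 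The Gaussian density bound then yields a factor $C^{n_1}\leq\exp(-cn_1)\asymp\exp(-\textbf{C}_5/t_1^{\gamma/2})$ since $n_1\asymp\varepsilon^{-2}\asymp t_1^{-\gamma/2}$. The Lipschitz dependence of $\sigma$ on $u$ is absorbed by the smallness $\mathcal{D}<\mathcal{D}_0$ via a comparison of the adapted diffusion with a frozen constant one.

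For part (b), I similarly decompose $u(t,x)=H_n(t,x)+J_n(t,x)$ for $t\in[t_n,t_{n+1}]$, where on $E_{n-1}$ the heat-propagated piece $H_n$ is bounded in sup norm by $\varepsilon^{2-\gamma}/3$ by $L^\infty$-contractivity. It then suffices to lower-bound the probability that $\sup_{[t_n,t_{n+1}]\times\mathbb{T}}|J_n|\leq c\varepsilon^{2-\gamma}$ together with $\sup_{\mathbb{T}}|J_n(t_{n+1},\cdot)|\leq c'\varepsilon^{2-\gamma}$. I will use the Da Prato--Kwapien--Zabczyk factorization to write $J_n$ as a deterministic fractional kernel applied to an auxiliary Gaussian-like field $Y_n$, whose $L^p$ sup-norm bounds follow from the same Parseval computation under \eqref{qnbound}. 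Passing from moment bounds on $\|Y_n\|_\infty$ to a small-ball lower bound is then carried out by controlling $J_n$ on a $\varepsilon^{2}$-net (Kolmogorov--Chentsov for the off-grid oscillation) and applying a Gaussian anti-concentration estimate at the same scale $t_1^{(2-\gamma)/4}$ as in (a), producing the matching factor $\textbf{C}_6\exp(-\textbf{C}_7/t_1^{\gamma/2})$.

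The principal obstacle in both parts is the sharp spectral analysis of $\Sigma_n$ (and of the DPKZ-factorized analogue in (b)): the Riesz kernel \eqref{rieszpara} creates genuine long-range spatial correlations that would annihilate Gaussian anti-concentration on too fine a grid, and only the specific scaling $c_0\asymp\varepsilon^{(4-4\gamma)/\gamma}$ forces $\sqrt{t_1}$ small enough relative to $\varepsilon^{2}$ to drive $\Sigma_n$ into a near-diagonal regime. Keeping the implicit constants sharp enough that the eigenvalue lower bound survives both the $\sigma$-Lipschitz perturbation (handled by $\mathcal{D}<\mathcal{D}_0$) and the sup-norm Kolmogorov step in (b), while still matching the stated exponent $1/t_1^{\gamma/2}$, is the delicate bookkeeping I expect to take the most work.
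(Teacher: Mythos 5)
Your treatment of part (a) is broadly the paper's: reduce via the Markov property, split into a frozen-coefficient Gaussian piece plus a Lipschitz error controlled by $\mathcal{D}<\mathcal{D}_0$, and then extract a factor $C^{n_1}$ by showing the covariance matrix of $(N(t_1,x_j))_j$ is close to diagonal. The paper phrases this as an $\ell^1$ control on the regression coefficients $\eta^{(j)}_k$ (bounding $\|\Sigma^{-1}\|_{1,1}$ via $\|{\bf I}-{\bf T}\|_{1,1}<1/3$), whereas you phrase it as a lower bound on $\lambda_{\min}(\Sigma_n)$; these are interchangeable given the scaling imposed by \eqref{c0}, and the paper's choice is slightly cleaner because the conditional-probability chain $\prod_j P(I_j\mid I_{j-1})$ needs exactly the conditional-variance lower bound, not a full spectral one. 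So (a) is essentially the same route.

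For part (b) there is a genuine gap. After $E_{n-1}$ the heat-propagated piece $H_n$ is small but \emph{not} centered, and the target event demands $|u(t_{n+1},\cdot)|\le\tfrac13\,$(the tight scale) at the endpoint while merely staying within the loose scale over $[t_n,t_{n+1}]$. A lower bound of the required quality cannot come from Kolmogorov--Chentsov plus Gaussian anti-concentration at a single point: the event is the intersection of a continuum of shifted (non-symmetric) slabs, and the shift is comparable to the standard deviation, so the joint small-ball probability does not factor into per-point anti-concentration estimates. Your proposal never produces the mechanism by which the noise actively cancels $H_n$ at time $t_{n+1}$. The paper's key step, which you omit, is a Girsanov change of measure: choose the drift $f$ so that under $Q$ the mild solution becomes $(1-t/t_1)G_t(u_0)+\widetilde N(t,x)$, i.e.\ the propagated initial profile is driven linearly to zero at $t_1$, turning the target into a \emph{centered} small-ball event for the Gaussian field $\widetilde N$. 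That centering is what makes the Gaussian correlation inequality (Lemma \ref{Gaussiancorr}) applicable, and the Cameron--Martin cost $\exp(-C/t_1^{\gamma/2})$ computed via Novikov/Cauchy--Schwarz is precisely the exponent you need to match. Without this (or an equivalent Cameron--Martin translation argument), the lower bound in (b) does not close.

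One further technical point: you quote the inherited bound on the propagated profile as $\varepsilon^{2-\gamma}/3$, but the Girsanov cost is only of the right order when the initial profile is of size $O(t_1^{(2-\gamma)/4})$, which is strictly smaller (since $t_1=\mathcal C_0\varepsilon^{4/\gamma}$ and $\gamma<1$); the paper's reduction to $|u_0|\le\tfrac13 t_1^{(2-\gamma)/4}$ reflects this, and any correct proof of (b) must respect that scale rather than $\varepsilon^{2-\gamma}$.
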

We then demonstrate how Theorem \ref{thm} results from Proposition \ref{prop}.

\textbf{Proof of Theorem \ref{thm}}: 
The event $F_{n}$ in $\eqref{Fn}$ deals with the behavior of $u(t,x)$ at time $t_n$, thus combining these events together indicates
\[F:=\bigcap_{n=0}^{\left\lfloor\frac{T}{t_1}\right\rfloor}F_{n}\supset \left\lbrace\vert u(t,x)\vert\leq t_1^{\frac{2-\gamma}{4}}, t\in[0,T],x\in\mathbb{T}\right\rbrace,\]
and
\[P(F)=P\left(\bigcap_{n=0}^{\left\lfloor\frac{T}{t_1}\right\rfloor}F_{n}\right)=P(F_{0})\prod_{n=1}^{\left\lfloor\frac{T}{t_1}\right\rfloor}P\left(F_{n}\bigg| \bigcap_{k=0}^{n-1}F_{k}\right).\]

With $u_0(x)\equiv 0$, $F_0=\Omega$, and Proposition \ref{prop}(a) immediately yields 
\begin{align*}
P(F)&\leq \left[\textbf{C}_4\exp\left(-\frac{\textbf{C}_5}{t_1^{\gamma/2}}\right)\right]^{\left\lfloor\frac{T}{t_1}\right\rfloor}\leq \textbf{C}_2\exp\left(-\frac{\textbf{C}_3T}{t_1^{1+\gamma/2}}\right).
\end{align*}
Therefore we have
\[
P\left(\left\lbrace\vert u(t,x)\vert\leq t_1^\frac{2-\gamma}{4}, t\in[0,T],x\in\mathbb{T}\right\rbrace\right)\leq\textbf{C}_2\exp\left(-\frac{\textbf{C}_3T}{t_1^{1+\gamma/2}}\right),
\]
and replacing $t_1^\frac{2-\gamma}{4}$ with $\varepsilon_2$ gives
\begin{equation}
\label{upper2}
P\left(\left\lbrace\vert u(t,x)\vert\leq \varepsilon_2, t\in[0,T],x\in\mathbb{T}\right\rbrace\right)\leq\textbf{C}_2\exp\left(-\frac{\textbf{C}_3T}{\varepsilon_2^{\frac{4+2\gamma}{2-\gamma}}}\right).
\end{equation}
We derive that, from \eqref{c0},
\[
\varepsilon_2=t_1^\frac{2-\gamma}{4}=\left(c_0\varepsilon^4\right)^\frac{2-\gamma}{4}=\left(\mathcal{C}_0\varepsilon^{\frac{4}{\gamma}}\right)^\frac{2-\gamma}{4}<\left(\mathcal{C}_0\varepsilon_1^{\frac{4}{\gamma}}\right)^\frac{2-\gamma}{4},
\]
so \eqref{upper2} provides the upper bound in Theorem \ref{thm} whenever $\varepsilon_0<\left(\mathcal{C}_0\varepsilon_1^{\frac{4}{\gamma}}\right)^\frac{2-\gamma}{4}$. 

On the other hand, the event $E_{n}$ in $\eqref{En}$ deals with the behavior of $u(t,x)$ at any time between $t_n$ and $t_{n+1}$, thus combining these events together indicates
\[E:=\bigcap_{n=-1}^{\left\lfloor\frac{T}{t_1}\right\rfloor}E_{n}\subset \left\lbrace\vert u(t,x)\vert\leq \mathcal{C}_3t_1^{\frac{2-\gamma}{4}}, t\in[0,T],x\in\mathbb{T}\right\rbrace,\]
and
\[P(E)=P\left(\bigcap_{n=-1}^{\left\lfloor\frac{T}{t_1}\right\rfloor}E_{n}\right)=P(E_{-1})\prod_{n=0}^{\left\lfloor\frac{T}{t_1}\right\rfloor}P\left(E_{n}\bigg| \bigcap_{k=-1}^{n-1}E_{k}\right).\]
With $u_0(x)\equiv 0$ and $E_{-1}=\Omega$, Proposition \ref{prop}(b) immediately yields
\[P(E)\geq \left[\textbf{C}_6\exp\left(-\frac{\textbf{C}_7}{t_1^{1-\gamma/2}}\right)\right]^{\left\lfloor\frac{T}{t_1}\right\rfloor+1}\geq \textbf{C}_0\exp\left(-\frac{\textbf{C}_1T}{t_1^{2-\gamma/2}}\right).\]
Therefore we have 
\[
P\left(\left\lbrace\vert u(t,x)\vert\leq \mathcal{C}_3t_1^{\frac{2-\gamma}{4}}, t\in[0,T],x\in\mathbb{T}\right\rbrace\right)\geq\textbf{C}_0\exp\left(-\frac{\textbf{C}_1T}{t_1^{2-\gamma/2}}\right),
\]
and replacing $\mathcal{C}_3t_1^\frac{2-\gamma}{4}$ with $\varepsilon_3$ gives
\begin{equation}
\label{lower2}
P\left(\left\lbrace\vert u(t,x)\vert\leq \varepsilon_3, t\in[0,T],x\in\mathbb{T}\right\rbrace\right)\geq\textbf{C}_0\exp\left(-\frac{\textbf{C}_1T}{\varepsilon_3^{\frac{8-2\gamma}{(2-\gamma)}}}\right).
\end{equation}
We derive that, from \eqref{c0},
\[
\varepsilon_3=\mathcal{C}_3t_1^\frac{2-\gamma}{4}=\mathcal{C}_3\left(c_0\varepsilon^4\right)^\frac{2-\gamma}{4}=\mathcal{C}_3\left(\mathcal{C}_0\varepsilon^{\frac{4}{\gamma}}\right)^\frac{2-\gamma}{4}<\mathcal{C}_3\left(\mathcal{C}_0\varepsilon_1^{\frac{4}{\gamma}}\right)^\frac{2-\gamma}{4},
\]
so \eqref{lower2} provides the lower bound in Theorem \ref{thm} whenever $\varepsilon_0<\mathcal{C}_3\left(\mathcal{C}_0\varepsilon_1^{\frac{4}{\gamma}}\right)^\frac{2-\gamma}{4}$. 

With such an $\varepsilon_0<\left(\mathcal{C}_0\varepsilon_1^{\frac{4}{\gamma}}\right)^\frac{2-\gamma}{4}\cdot\min\left\lbrace\mathcal{C}_3,1\right\rbrace$, we conclude Theorem \ref{thm} from Proposition \ref{prop}, and the rest of this paper is devoted to the proof of Proposition \ref{prop}.

\section{Preliminaries}\label{estimates}
In this section, we provide some preliminary results that are used to prove Proposition \ref{prop}.
\subsection{Heat Kernel Estimates} In this subsection, we prove some results related to the heat kernel $G(t,x)$. To simplify our computation, we write $G_t(x):=G(t,x)$ and recall that the explicit form for the heat kernel on $\mathbb{T}$ is given by
$$
G_{t}(x)=\sum_{n\in\Z}(4\pi t)^{-1/2}\exp\left(-\frac{(x+2n)^2}{4t}\right).
$$
The following statement has been proven for $x\in\R$ in the paper \cite{sanz2002holder}. Here we prove for the case when $x\in\mathbb{T}$.

\begin{lemma} \label{heatlemma}There exist positive constants $C(\alpha,\xi)$ and $C(\alpha,\zeta)$ such that for all $\alpha\in(0,1)$, $\xi\in(0,2\alpha\wedge 1)$, $\zeta\in(0,\alpha)$, $0<s<t\leq 1$ and $x,y\in\mathbb{T}$, we have
\begin{equation}
\label{heatspace}
\int_0^t\int_{-1}^1\vert G_{t-r}(x-z)-G_{t-r}(y-z)\vert(t-r)^{\alpha-1} dzdr\leq C(\alpha,\xi)\vert x-y\vert^\xi,
\end{equation}
\begin{equation}
\label{heattime}
\int_0^s\int_{-1}^1\left\vert G_{t-r}(x-z)(t-r)^{\alpha-1}-G_{s-r}(x-z)(s-r)^{\alpha-1}\right\vert dzdr\leq C(\alpha,\zeta)\vert t-s\vert^\zeta.
\end{equation}
\end{lemma}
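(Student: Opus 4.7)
The plan is to reduce both estimates on the torus to the corresponding estimates on the real line, which are established in \cite{sanz2002holder}. The heat kernel on $\mathbb{T}$ admits the periodization representation
\[
G_t(x) = \sum_{n \in \mathbb{Z}} p_t(x + 2n), \qquad p_t(x) := (4\pi t)^{-1/2} \exp\!\left(-\tfrac{x^2}{4t}\right),
\]
where $p_t$ is the standard Gaussian heat kernel on $\mathbb{R}$. Applying the triangle inequality term by term yields
\[
|G_{t-r}(x-z) - G_{t-r}(y-z)| \leq \sum_{n \in \mathbb{Z}} |p_{t-r}(x-z+2n) - p_{t-r}(y-z+2n)|,
\]
with a completely analogous pointwise bound for the temporal increment in \eqref{heattime}.

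For the spatial estimate \eqref{heatspace}, I would apply Tonelli (all integrands are nonnegative) to interchange the sum and the $z$-integral, then perform the change of variables $w = z - 2n$ in each summand. Since the shifted intervals $[-1+2n,\,1+2n]$ tile $\mathbb{R}$, the sum collapses to a single integral:
\[
\int_{-1}^1 |G_{t-r}(x-z) - G_{t-r}(y-z)|\, dz \leq \int_{\mathbb{R}} |p_{t-r}(x-w) - p_{t-r}(y-w)|\, dw.
\]
Integrating against $(t-r)^{\alpha-1}$ over $r\in[0,t]$ gives precisely the quantity estimated on the whole line in \cite{sanz2002holder}, so the bound $C(\alpha,\xi)|x-y|^\xi$ is inherited for free.

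The temporal estimate \eqref{heattime} follows by the same recipe. The periodization--triangle inequality--unfolding step transforms the $z$-integral over $[-1,1]$ into
\[
\int_{\mathbb{R}} \left|p_{t-r}(x-w)(t-r)^{\alpha-1} - p_{s-r}(x-w)(s-r)^{\alpha-1}\right|\, dw,
\]
and the corresponding real-line bound from \cite{sanz2002holder} closes the argument.

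The only real obstacle is to confirm that the real-line constants of \cite{sanz2002holder} depend only on $\alpha,\xi,\zeta$ and not on any spatial truncation, and to check that for $x,y\in[-1,1]$ the Euclidean distance on the right-hand side is comparable to the intrinsic torus distance, so that the conclusion is genuinely uniform. If citing the real-line result directly proves awkward, a self-contained backup is to mimic the proof of the real-line bounds while inserting the Gaussian tail decay of $p_{t-r}(x-z+2n)$ in $n$ to control the series; this converges uniformly in $t\leq 1$ and produces constants of the advertised form.
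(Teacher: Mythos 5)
Your proposal is correct and follows essentially the same route as the paper: both reduce the torus integrals to their real-line counterparts via the periodization $G_t(x)=\sum_n p_t(x+2n)$, the triangle inequality, and Tonelli plus the tiling change of variables $w=z-2n$. The only difference is that the paper then re-derives the real-line Hölder bounds from scratch (interpolating against the $L^1$-bound $\int|G_{t-r}(x-\cdot)-G_{t-r}(y-\cdot)|\le 2$ to produce the $(t-r)^{\alpha-1-\xi/2}$ factor and then applying the mean value theorem), whereas you invoke \cite{sanz2002holder} directly for the $\mathbb{R}$-estimates; your concern about torus versus Euclidean distance is moot because the lemma is only used for small $|x-y|$, where the two agree, and the Euclidean bound is in any case what the paper itself proves.
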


\begin{proof}
For any $\xi\in(0,2\alpha\wedge 1)$, we have
\begin{align*}
&\int_{-1}^1\vert G_{t-r}(x-z)-G_{t-r}(y-z)\vert(t-r)^{\alpha-1} dz\\
&\leq 2^{1-\xi}(t-r)^{\alpha-1-\frac{\xi}{2}}\left(\int_{-1}^1\left\vert\sum_{n\in\Z}\left[\exp\left(-\frac{(x+2n-z)^2}{4(t-r)}\right)\right.\right.\right.\\
&\hspace{7cm}\left.\left.\left.-\exp\left(-\frac{(y+2n-z)^2}{4(t-r)}\right)\right]\right\vert dz\right)^\xi\\
&\leq 2^{1-\xi}(t-r)^{\alpha-1-\frac{\xi}{2}}\left(\sum_{n\in\Z}\int_{-1}^1\left\vert\exp\left(-\frac{(x+2n-z)^2}{4(t-r)}\right)\right.\right.\\
&\hspace{7cm}\left.\left.-\exp\left(-\frac{(y+2n-z)^2}{4(t-r)}\right)\right\vert dz\right)^\xi\\
&=2^{1-\xi}(t-r)^{\alpha-1-\frac{\xi}{2}}\left(\int_\R\left\vert\exp\left(-\frac{(x-z)^2}{4(t-r)}\right)-\exp\left(-\frac{(y-z)^2}{4(t-r)}\right)\right\vert dz\right)^\xi\\
&\leq C(\xi)(t-r)^{\alpha-1-\frac{\xi}{2}}\vert x-y\vert^\xi.
\end{align*}
The first inequality follows that the integral $\int_{-1}^1\vert G_{t-r}(x-z)-G_{t-r}(y-z)\vert dz $ is bounded above by $2$. The last inequality is obtained by applying the Mean Value theorem to the function $x\to \exp\left(-\frac{(x-z)^2}{4(t-r)}\right)$, so there exists $x'\in(x,y)$ such that
$$\left\vert\exp\left(-\frac{(x-z)^2}{4(t-r)}\right)-\exp\left(-\frac{(y-z)^2}{4(t-r)}\right)\right\vert=\frac{1}{2}\vert x-y\vert\exp\left(-\frac{(x'-z)^2}{4(t-r)}\right)\cdot\frac{\vert x'-z\vert}{(t-r)}.$$ 
Because $x\leq \exp\left(\frac{x^2}{2}\right)$ for $\forall x\in\mathbb{R}$, we have
$$\frac{\vert x'-z\vert}{2(t-r)}=\frac{\vert x'-z\vert}{2(t-r)^{\frac{1}{2}}}\cdot\frac{1}{(t-r)^{\frac{1}{2}}}\leq \exp\left(\frac{(x'-z)^2}{8(t-r)}\right)\cdot\frac{1}{(t-r)^{\frac{1}{2}}},$$
which implies
\begin{equation*}
\begin{split}
&\int_\R\left\vert\exp\left(-\frac{(x-z)^2}{4(t-r)}\right)-\exp\left(-\frac{(y-z)^2}{4(t-r)}\right)\right\vert dz\\
&\hspace{4cm}=\int_\R\vert x-y\vert\exp\left(-\frac{(x'-z)^2}{4(t-r)}\right)\cdot\frac{\vert x'-z\vert}{2(t-r)}dz\\
&\hspace{4cm}\leq \vert x-y\vert\int_\R\exp\left(-\frac{(x'-z)^2}{8(t-r)}\right)\cdot\frac{1}{(t-r)^{\frac{1}{2}}} dz=C\vert x-y\vert.
\end{split}
\end{equation*}
Therefore, we derive
\begin{multline*}
\int_0^t\int_{-1}^1\vert G_{t-r}(x-z)-G_{t-r}(y-z)\vert(t-r)^{\alpha-1} dzdr\\
\leq \int_0^t C(\xi)(t-r)^{\alpha-1-\frac{\xi}{2}}\vert x-y\vert^\xi dr\leq C(\alpha,\xi)t^{\alpha-\frac{\xi}{2}}\vert x-y\vert^\xi\leq C(\alpha,\xi)\vert x-y\vert^\xi , 
\end{multline*}
which completes the proof of \eqref{heatspace}. 

For \eqref{heattime}, we decompose the integral into two parts,
\begin{equation}
\label{twoparts}
\begin{split}
&\int_0^s\int_{-1}^1\left\vert \sum_{n\in\Z}\left[\exp\left(-\frac{(x+2n-z)^2}{4(t-r)}\right)(t-r)^{\alpha-1-\frac{1}{2}}\right.\right.\\
&\hspace{6cm}\left.\left.-\exp\left(-\frac{(x+2n-z)^2}{4(s-r)}\right)(s-r)^{\alpha-1-\frac{1}{2}}\right]\right\vert dzdr\\
&\leq \int_0^s\int_\R\left\vert \exp\left(-\frac{(x-z)^2}{4(t-r)}\right)(t-r)^{\alpha-1-\frac{1}{2}}-\exp\left(-\frac{(x-z)^2}{4(s-r)}\right)(s-r)^{\alpha-1-\frac{1}{2}}\right\vert dzdr\\
&\leq \int_0^s\int_\R \left\vert \exp\left(-\frac{(x-z)^2}{4(t-r)}\right)(t-r)^{\alpha-1-\frac{1}{2}}-\exp\left(-\frac{(x-z)^2}{4(s-r)}\right)(t-r)^{\alpha-1-\frac{1}{2}}\right\vert dzdr\\
&+\int_0^s\int_\R \left\vert \exp\left(-\frac{(x-z)^2}{4(s-r)}\right)(t-r)^{\alpha-1-\frac{1}{2}}-\exp\left(-\frac{(x-z)^2}{4(s-r)}\right)(s-r)^{\alpha-1-\frac{1}{2}}\right\vert dzdr.
\end{split}
\end{equation}
For any $\zeta\in(0,\alpha)$, we have
\begin{equation}
\label{ineqzeta}
\begin{split}
&\left\vert \exp\left(-\frac{(x-z)^2}{4(t-r)}\right)-\exp\left(-\frac{(x-z)^2}{4(s-r)}\right)\right\vert\\
&\hspace{5cm}\leq\left\vert \exp\left(-\frac{(x-z)^2}{4(t-r)}\right)-\exp\left(-\frac{(x-z)^2}{4(s-r)}\right)\right\vert^\zeta.
\end{split}
\end{equation}
Apply the Mean Value theorem to function $t\to \exp\left(-\frac{(x-z)^2}{4(t-r)}\right)$, and there exists $t'\in(s,t)$ such that
\begin{align*}
&\left\vert\exp\left(-\frac{(x-z)^2}{4(t-r)}\right)-\exp\left(-\frac{(x-z)^2}{4(s-r)}\right)\right\vert=(t-s)\exp\left(-\frac{(x-z)^2}{4(t'-r)}\right)\cdot\frac{(x-z)^2}{4(t'-r)^2}\\
&\hspace{5cm}=2(t-s)(t'-r)^{-1}\exp\left(-\frac{(x-z)^2}{4(t'-r)}\right)\cdot\frac{(x-z)^2}{8(t'-r)}\\
&\hspace{5cm}\leq 2(t-s)(t'-r)^{-1}\exp\left(-\frac{(x-z)^2}{8(t'-r)}\right).
\end{align*}
The last inequality follows from the property $x\leq\exp(x)$ for $\forall x\in\R$. Plug the above into the RHS of \eqref{ineqzeta} to parametrize the upper bound in term of $\zeta$, thus we have 
$$\left\vert \exp\left(-\frac{(x-z)^2}{4(t-r)}\right)-\exp\left(-\frac{(x-z)^2}{4(s-r)}\right)\right\vert\leq C(\zeta)(t-s)^\zeta (s-r)^{-\zeta}\exp\left(-\zeta\frac{(x-z)^2}{8(t-r)}\right).$$
As a result,
\begin{equation}
\label{I1}
\begin{split}
&\int_0^s\int_\R \left\vert \exp\left(-\frac{(x-z)^2}{4(t-r)}\right)(t-r)^{\alpha-1-\frac{1}{2}}-\exp\left(-\frac{(x-z)^2}{4(s-r)}\right)(t-r)^{\alpha-1-\frac{1}{2}}\right\vert dzdr\\
&\leq C(\zeta)(t-s)^\zeta\int_0^s(s-r)^{-\zeta}(t-r)^{\alpha-1}\int_\R\exp\left(-\zeta\frac{(x-z)^2}{8(t-r)}\right)(t-r)^{-\frac{1}{2}}dzdr\\
&\leq C(\zeta)(t-s)^\zeta\int_0^s(s-r)^{\alpha-1-\zeta}dr\leq C_{\alpha,\zeta}s^{\alpha-\zeta}(t-s)^\zeta\leq C_{\alpha,\zeta}(t-s)^\zeta.
\end{split}
\end{equation}
Similarly, for any $\zeta\in(0,\alpha),r<s\leq t$, we have
$$\left\vert (t-r)^{\alpha-1-\frac{1}{2}}-(s-r)^{\alpha-1-\frac{1}{2}}\right\vert^{1-\zeta}\leq 2(s-r)^{(\alpha-1-\frac{1}{2})(1-\zeta)}.$$
If we multiply $\left\vert (t-r)^{\alpha-1-\frac{1}{2}}-(s-r)^{\alpha-1-\frac{1}{2}}\right\vert^{\zeta}$ to both sides, we obtain
\begin{equation}
\label{ineqtime}
\left\vert (t-r)^{\alpha-1-\frac{1}{2}}-(s-r)^{\alpha-1-\frac{1}{2}}\right\vert\leq 2(s-r)^{(\alpha-1-\frac{1}{2})(1-\zeta)}\left\vert (t-r)^{\alpha-1-\frac{1}{2}}-(s-r)^{\alpha-1-\frac{1}{2}}\right\vert^{\zeta}.
\end{equation}
Applying the Mean Value theorem to the function of $x\to (x-r)^{\alpha-1-\frac{1}{2}}$ yields
$$\left\vert (t-r)^{\alpha-1-\frac{1}{2}}-(s-r)^{\alpha-1-\frac{1}{2}}\right\vert = C(\alpha)(t-s)(t'-r)^{\alpha-2-\frac{1}{2}}$$
for some $t'\in(s,t)$. Plug the above into the RHS of \eqref{ineqtime} to parametrize the upper bound in term of $\zeta$, thus we have 
\begin{align*}
\left\vert (t-r)^{\alpha-1-\frac{1}{2}}-(s-r)^{\alpha-1-\frac{1}{2}}\right\vert&\leq C(\alpha,\zeta)(s-r)^{(\alpha-1-\frac{1}{2})(1-\zeta)+(\alpha-2-\frac{1}{2})\zeta}(t-s)^\zeta\\
&=C(\alpha,\zeta)(s-r)^{(\alpha-1-\frac{1}{2}-\zeta)}(t-s)^\zeta.
\end{align*}
Consequently,
\begin{equation}
\label{I2}
\begin{split}
&\int_0^s\int_\R \left\vert \exp\left(-\frac{(x-z)^2}{4(s-r)}\right)(t-r)^{\alpha-1-\frac{1}{2}}-\exp\left(-\frac{(x-z)^2}{4(s-r)}\right)(s-r)^{\alpha-1-\frac{1}{2}}\right\vert dzdr\\
&\leq C(\alpha,\zeta)(t-s)^\zeta\int_0^s(s-r)^{(\alpha-1-\zeta)}\int_\R (s-r)^{-\frac{1}{2}}\exp\left(-\frac{(x-z)^2}{4(s-r)}\right)dzdr\\
&\leq C(\alpha,\zeta)s^{\alpha-\zeta}(t-s)^\zeta\leq C(\alpha,\zeta)(t-s)^\zeta.
\end{split}
\end{equation}
Combining \eqref{twoparts}, \eqref{I1} and \eqref{I2} completes the proof of \eqref{heattime}.
\end{proof}

\subsection{Noise Term Estimates}
Define the second integral of \eqref{mild}, i.e. noise term, by 
\begin{equation}
\label{noiseterm}
N(t,x):=\int_0^t\int_{-1}^1G_{t-s}(x-y)\sigma(s,y,u(s,y))F(dyds).
\end{equation}
When $\sigma\equiv 1$, we may express $N(t,x)$ in terms of random Fourier series decomposition as
\begin{equation}
\label{noisetermexp}
\begin{split}
&N(t,x)=\sum_{n=0}^\infty\sqrt{q_n}\left(\cos(\pi nx)\int_0^te^{-\pi^2n^2(t-s)}\beta_n(ds)\right.\\
&\hspace{5cm}\left.+\sin(\pi nx)\int_0^te^{-\pi^2n^2(t-s)}\tilde{\beta}_n(ds)\right),
\end{split}
\end{equation}
where the sequence $\{\beta_n\}_{n=0}^\infty$ and $\{\tilde{\beta}_n\}_{n=0}^\infty$ are independent and each formed of independent 1-dimensional real-valued $(\mathcal{F}_t)$-adapted Brownian motions. \cite{tindel2003stochastic} Corollary 1 shows that the necessary and sufficient condition for existence of \eqref{noisetermexp} in $L^2(\Omega\times[0,T]\times[-1,1])$ is
$$\sum_{n=1}^\infty \frac{q_n}{n^2}<\infty.$$

We could apply a factorization method (see \cite{da2014stochastic} Theorem 5.10) to $N(t,x)$ under assumptions \eqref{assume1} and \eqref{assume2} of $\sigma$. By the semigroup property of $G$ and a stochastic Fubini's theorem (see \cite{walsh1986anintroductiontostochastic} Theorem 2.6), for all $\alpha\in(0,1)$, we have
$$N(t,x)=\frac{\sin(\pi\alpha)}{\pi}\int_0^t\int_{-1}^1G_{t-r}(x-z)(t-r)^{\alpha-1}Y_\alpha(r,z)dzdr,$$
where
$$Y_\alpha(r,z)=\int_0^r\int_{-1}^1G_{r-s}(z-y)\sigma(s,y,u(s,y))(r-s)^{-\alpha}F(dyds).$$
Thanks to this factorization, we could estimate the noise term. Before that, we prove the following lemma which is similar to \cite{sanz2002holder} Lemma 2.2.

\begin{lemma}\label{Ylemma} For any $T\geq 0$, $\alpha\in\left(0,\frac{2-\gamma}{4}\right)$,
\begin{equation*}
\sup_{0\leq r\leq T}\sup_{z\in \mathbb{T}}\mathbb{E}\left[Y_\alpha^2(r,z)\right]<+\infty.
\end{equation*}
\end{lemma}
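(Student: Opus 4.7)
The plan is to bound $\mathbb{E}[Y_\alpha^2(r,z)]$ uniformly in $(r,z)$ by applying Walsh's $L^2$-isometry for stochastic integrals against the colored noise $F$ and then extracting the correct singularity $t^{-\gamma/2}$ from the spatial covariance kernel $\Lambda$. By \eqref{Fdef} and the uniform bound $\sigma \leq \mathcal{C}_2$ from \eqref{assume2}, the isometry yields
\begin{equation*}
\mathbb{E}[Y_\alpha^2(r,z)] \;\leq\; \mathcal{C}_2^2 \int_0^r (r-s)^{-2\alpha} I(r-s,z)\, ds,
\end{equation*}
where
\begin{equation*}
I(t,z) \;:=\; \int_{-1}^1\!\int_{-1}^1 G_t(z-y)\,G_t(z-y')\,\Lambda(y-y')\,dy\,dy'.
\end{equation*}

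The main step is to prove $I(t,z) \leq C\,t^{-\gamma/2}$ uniformly in $z \in \mathbb{T}$ for $t \in (0,T]$. I would use Parseval on $\mathbb{T}$: the complex exponentials $\{e^{i\pi n x}\}_{n \in \Z}$ simultaneously diagonalize the heat semigroup, with eigenvalues $e^{-\pi^2 n^2 t}$, and convolution with $\Lambda$, with eigenvalues $q_{|n|}$ read off from \eqref{lambdafourier}. This collapses the double integral into
\begin{equation*}
I(t,z) \;=\; C\sum_{n\geq 0} q_n\, e^{-2\pi^2 n^2 t},
\end{equation*}
which is manifestly independent of $z$. Applying the upper bound $q_n \leq C' n^{\gamma-1}$ from \eqref{qnbound} and comparing the series with the integral $\int_0^\infty x^{\gamma-1} e^{-2\pi^2 x^2 t}\,dx$ via the substitution $u = x\sqrt{t}$ delivers the advertised $O(t^{-\gamma/2})$ estimate.

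Substituting this back produces
\begin{equation*}
\mathbb{E}[Y_\alpha^2(r,z)] \;\leq\; C\int_0^r (r-s)^{-2\alpha-\gamma/2}\,ds,
\end{equation*}
which is finite precisely when $2\alpha + \gamma/2 < 1$, i.e., $\alpha < (2-\gamma)/4$ — exactly the hypothesis. The resulting bound depends on $r$ only through the factor $r^{1-2\alpha-\gamma/2}$, so it is bounded uniformly over $r \leq T$ and independent of $z$, yielding the lemma.

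The main obstacle I anticipate is justifying the Parseval rearrangement rigorously, since $\Lambda(y)=|y|^{-\gamma}$ is only a locally integrable distribution with a singularity at the origin and \eqref{lambdafourier} must be interpreted distributionally. One can make this step precise by an approximation argument using smooth truncations of $\Lambda$ and monotone convergence, or sidestep it entirely by a direct convolution computation: bound the torus heat kernel by the sum of its periodic Gaussian images, then use the semigroup identity $(G_t \ast G_t)(w) = G_{2t}(w)$ to reduce $I(t,z)$ to $\int G_{2t}(w)|w|^{-\gamma}\,dw$, whose value is $O(t^{-\gamma/2})$ by Gaussian scaling, with integrability at $w=0$ guaranteed by $\gamma<1$.
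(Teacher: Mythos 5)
Your proposal is correct and takes essentially the same approach as the paper: both express the variance through the Fourier modes of the noise and arrive at the same expression $\mathcal{C}_2^2\sum_{n\geq 0} q_n\int_0^r (r-s)^{-2\alpha}e^{-2\pi^2 n^2(r-s)}\,ds$ after invoking \eqref{assume2}. The only cosmetic difference is the order of the two remaining operations — the paper integrates in time first, obtaining $\Gamma(1-2\alpha)n^{4\alpha-2}$ per mode, and then sums over $n$ using \eqref{qnbound}, while you sum over $n$ first to extract the $(r-s)^{-\gamma/2}$ singularity and then integrate in time — but by Tonelli these are identical, and both yield precisely the threshold $\alpha<\tfrac{2-\gamma}{4}$.
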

\begin{proof} By assumption \eqref{assume2} of $\sigma$, we have
\begin{align*}
&\mathbb{E}\left[Y_\alpha^2(r,z)\right]=\mathbb{E}\left[\left(\int_0^r\int_{-1}^1G_{r-s}(z-y)\sigma(s,y,u(s,y))(r-s)^{-\alpha}F(dyds)\right)^2\right]\\
&\leq \sup_{s,y}\mathbb{E}\left[\sigma^2(s,y,u(s,y))\right]\cdot\mathbb{E}\left[\left(\int_0^r\int_{-1}^1G_{r-s}(z-y)(r-s)^{-\alpha}F(dyds)\right)^2\right]\\
&\leq \mathcal{C}_2^2\mathbb{E}\left[\left(\sum_{n=0}^\infty\sqrt{q_n}\left(\cos(n\pi z)\int_0^r(r-s)^{-\alpha} e^{-\pi^2n^2(r-s)}\beta_n(ds)\right.\right.\right.\\
&\hspace{5cm}\left.\left.\left.+\sin(n\pi z)\int_0^r (r-s)^{-\alpha}e^{-\pi^2n^2(r-s)}\tilde{\beta}_n(ds)\right)\right)^2\right]\\
&=\mathcal{C}_2^2\sum_{n=0}^\infty q_n\left(\sin^2(n\pi z)+\cos^2(n\pi z)\right)\int_0^r(r-s)^{-2\alpha} e^{-2\pi^2n^2(r-s)}ds\\
&=\mathcal{C}_2^2\sum_{n=0}^\infty q_n\int_0^r(r-s)^{-2\alpha} e^{-2\pi^2n^2(r-s)}ds\\
&\leq C\mathcal{C}_2^2\left[q_0+\sum_{n=1}^\infty q_n n^{4\alpha-2}\Gamma(1-2\alpha)\right]\\
&<C+C'\sum_{n=1}^\infty n^{4\alpha+\gamma-3}<\infty
\end{align*}
The second inequality follows the Fourier series decomposition of a spatial convolution, and we substitute $2\pi^2n^2(r-s)$ in the last inequality to get an Euler gamma function. Recall $q_n$ in \eqref{qn}, and the summation converges so that the lemma follows.
\end{proof}

We will now estimate the spatial regularity of $N(t,x)$ with $t\leq 1$,
\begin{align*}
&\mathbb{E}\left[(N(t,x)-N(t,y))^2\right]\\
&=C(\alpha) \mathbb{E}\left[\left(\int_0^t\int_{-1}^1[G_{t-r}(x-z)-G_{t-r}(y-z)](t-r)^{\alpha-1}Y_\alpha(r,z)dzdr\right)^2\right]
\\
&\leq C(\alpha) \mathbb{E}\left[\left(\int_0^t\int_{-1}^1\left\vert[G_{t-r}(x-z)-G_{t-r}(y-z)](t-r)^{\alpha-1}Y_\alpha(r,z)\right\vert dzdr\right)^2\right]
\\
&\leq C(\alpha) \sup_{r,z}\mathbb{E}\left[Y_\alpha^2(r,z)\right]\left(\int_0^t\int_{-1}^1\left\vert [G_{t-r}(x-z)-G_{t-r}(y-z)](t-r)^{\alpha-1}\right\vert dzdr\right)^2.
\end{align*}
By Lemma \ref{heatlemma} and Lemma \ref{Ylemma}, we conclude that there exists a constant $C>0$ depending only on $\alpha,\gamma$ and $\xi$ with $\xi\in(0,2\alpha)$ and $\alpha\in\left(0,\frac{2-\gamma}{4}\right)$ such that
\begin{equation}
\label{spacereg}
\mathbb{E}\left[(N(t,x)-N(t,y))^2\right]\leq C\mathcal{C}_2^2\vert x-y\vert^{2\xi}.
\end{equation}

For temporal regularity,
\begin{align*}
&\mathbb{E}\left[(N(t,x)-N(s,x))^2\right]\\
&= C(\alpha) \mathbb{E}\left[\left(\int_0^s\int_{-1}^1\left[G_{t-r}(x-z)(t-r)^{\alpha-1}-G_{s-r}(x-z)(s-r)^{\alpha-1}\right]Y_\alpha(r,z)dzdr\right.\right.\\
&\hspace{5cm}\left.\left.+\int_s^t\int_{-1}^1G_{t-r}(x-z)(t-r)^{\alpha-1}Y_{\alpha}(r,z)dzdr\right)^2\right]\\
&= C(\alpha) \left(\mathbb{E}\left[\left( \int_0^s\int_{-1}^1\left[G_{t-r}(x-z)(t-r)^{\alpha-1}-G_{s-r}(x-z)(s-r)^{\alpha-1}\right]Y_\alpha(r,z)dzdr\right)^2\right]\right.\\
&\hspace{5cm}\left.+\mathbb{E}\left[\left( \int_s^t\int_{-1}^1G_{t-r}(x-z)(t-r)^{\alpha-1}Y_{\alpha}(r,z)dzdr\right)^2\right]\right)\\
&\leq C(\alpha)\sup_{r,z}\mathbb{E}\left[Y_\alpha^2(r,z)\right]\left(I_1^2+I_2^2\right),
\end{align*}
where
\begin{equation*}
I_1=\int_0^s\int_{-1}^1\left\vert G_{t-r}(x-z)(t-r)^{\alpha-1}-G_{s-r}(x-z)(s-r)^{\alpha-1}\right\vert dzdr,
\end{equation*}
and
\begin{equation}
\label{I2equ}
I_2=\int_s^t\int_{-1}^1\left\vert G_{t-r}(x-z)(t-r)^{\alpha-1}\right\vert dzdr=C_\alpha(t-s)^\alpha<C_\alpha(t-s)^\zeta.
\end{equation}
By Lemma \ref{heatlemma} and for $0\leq s<t\leq 1$, we derive
\begin{equation}
\label{I1equ}
I_1\leq C_{\alpha,\zeta}(t-s)^\zeta.
\end{equation}
Therefore, we conclude that there exists a constant $C>0$ depending only on $\alpha,\gamma$ and $\zeta$ with $\zeta\in(0,\alpha)$ and $\alpha\in\left(0,\frac{2-\gamma}{4}\right)$ such that
\begin{equation*}
\label{timereg}
\mathbb{E}\left[(N(t,x)-N(s,x))^2\right]\leq C\mathcal{C}_2^2(t-s)^{2\zeta}.
\end{equation*}

\begin{lemma}There exist positive constants $C_1,C_2$ depending only on $\alpha,\gamma$ and $\xi$, and positive constants $C_3,C_4$ depending only on $\alpha,\gamma$ and $\zeta$ such that for all $0\leq s<t\leq 1$, $x,y\in\mathbb{T}$, $\alpha\in\left(0,\frac{2-\gamma}{4}\right)$, $\xi\in(0,2\alpha),\zeta\in(0,\alpha)$, and $\lambda>0$, we have
\begin{equation}
\label{spacep}
P(\vert N(t,x)-N(t,y)\vert>\lambda)\leq C_1\exp\left(-\frac{C_2\lambda^2}{\mathcal{C}_2^2\vert x-y\vert^{2\xi}}\right),
\end{equation}
\begin{equation}
\label{timep}
P(\vert N(t,x)-N(s,x)\vert>\lambda)\leq C_3\exp\left(-\frac{C_4\lambda^2}{\mathcal{C}_2^2\vert t-s\vert^{2\zeta}}\right).
\end{equation}
\end{lemma}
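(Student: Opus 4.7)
The plan is to upgrade the second-moment estimate \eqref{spacereg} and its time analog to subgaussian tail bounds via an $L^{2p}$-moment method, using Burkholder-Davis-Gundy (BDG) and Markov's inequality with optimization in $p$.

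\emph{Step 1 (higher moments of $Y_\alpha$).} I would first extend Lemma \ref{Ylemma} to
$$\sup_{0\leq r\leq T}\sup_{z\in\mathbb{T}}\mathbb{E}\bigl[|Y_\alpha(r,z)|^{2p}\bigr]\leq (Cp)^p\mathcal{C}_2^{2p},\qquad p\geq 1.$$
For fixed $r$ and $z$, the process $M_t:=\int_0^t\int_{-1}^1 G_{r-s}(z-y)(r-s)^{-\alpha}\sigma(s,y,u(s,y))\,F(dyds)$ is a continuous square-integrable martingale on $[0,r]$ with $M_r=Y_\alpha(r,z)$. Since $G\geq 0$, $\sigma\leq\mathcal{C}_2$, and $\Lambda\geq 0$ by \eqref{rieszpara}, its quadratic variation is pathwise dominated by $\mathcal{C}_2^2$ times the deterministic integral
$$\int_0^r\!\int_{-1}^1\!\int_{-1}^1 G_{r-s}(z-y)G_{r-s}(z-y')(r-s)^{-2\alpha}\Lambda(y-y')\,dy\,dy'\,ds,$$
which was already bounded by a constant in the proof of Lemma \ref{Ylemma}. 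Classical BDG with constant $C_{2p}\leq(Cp)^p$ then delivers the claim.

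\emph{Step 2 (propagation to $N$).} From the factorization of $N(t,x)-N(t,y)$ and Minkowski's integral inequality,
$$\|N(t,x)-N(t,y)\|_{L^{2p}(\Omega)}\leq C(\alpha)\sup_{r,z}\|Y_\alpha(r,z)\|_{L^{2p}}\int_0^t\!\int_{-1}^1\!|G_{t-r}(x-z)-G_{t-r}(y-z)|(t-r)^{\alpha-1}dzdr,$$
which together with Step 1 and \eqref{heatspace} in Lemma \ref{heatlemma} gives $\|N(t,x)-N(t,y)\|_{L^{2p}}\leq C(\alpha,\xi)\mathcal{C}_2\sqrt{p}\,|x-y|^\xi$.

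\emph{Step 3 (Markov and optimization).} Markov's inequality yields
$$P(|N(t,x)-N(t,y)|>\lambda)\leq \lambda^{-2p}\|N(t,x)-N(t,y)\|_{L^{2p}}^{2p}=\left(\frac{C(\alpha,\xi)\,p\,\mathcal{C}_2^2|x-y|^{2\xi}}{\lambda^2}\right)^p,$$
and choosing $p$ proportional to $\lambda^2/(\mathcal{C}_2^2|x-y|^{2\xi})$ — with the trivial bound $P\leq 1$ absorbed into $C_1$ when this $p$ drops below $1$ — produces \eqref{spacep}. The time-increment bound \eqref{timep} follows from the identical argument, using \eqref{heattime} and the trivial estimate from \eqref{I2equ} in place of \eqref{heatspace}.

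The main subtlety is Step 1, since $Y_\alpha$ is not Gaussian because of the random factor $\sigma(s,y,u(s,y))$, so Gaussian hypercontractivity is unavailable. The point is that positivity of $G$, $\Lambda$, together with the pointwise upper bound $\sigma\leq\mathcal{C}_2$, force the random quadratic variation to be dominated by a deterministic constant, reducing the problem to the classical continuous-time BDG inequality which supplies the required $(Cp)^p$ moment growth to close the argument.
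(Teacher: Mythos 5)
Your proposal is correct, but it follows a genuinely different route from the paper's. The paper proceeds by considering the auxiliary process $N_t(s,x) := \int_0^s\int_{-1}^1 G_{t-r}(x-y)\sigma\,F(dydr)$, which for fixed $t$ is a continuous $\mathcal{F}_s^F$-martingale in $s$ with terminal value $N(t,x)$; then $M_s := N_t(s,x)-N_t(s,y)$ is a continuous martingale, so it is a time-changed Brownian motion $M_s = B_{\langle M\rangle_s}$, and the tail bound drops out of the reflection principle once $\langle M\rangle_t$ is controlled by $C\mathcal{C}_2^2|x-y|^{2\xi}$. For \eqref{timep} they split $N(t,x)-N(s,x) = R_s + S_{t-s}$ into two martingale pieces and apply the same reflection argument to each. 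Your approach instead controls $L^{2p}$-moments of $Y_\alpha$ using the a.s.\ bound on its quadratic variation (which is clean at the $Y_\alpha$ level because $G\geq 0$, $\Lambda\geq 0$, and $\sigma\leq\mathcal{C}_2$), then pushes these moments through the factorization via Minkowski and \eqref{heatspace}/\eqref{heattime}, and finally optimizes Markov's inequality in $p$. The two routes are morally equivalent (subgaussian quadratic-variation control), but differ in where the key pathwise domination of the bracket is invoked: you do it upstream, on the martingale generating $Y_\alpha$, where the integrand is the nonnegative kernel $G_{r-s}(z-y)(r-s)^{-\alpha}$, so the bound $\langle\cdot\rangle\leq\mathcal{C}_2^2\times(\text{deterministic})$ is immediate; the paper does it downstream, on $M_s$ itself, whose integrand is a signed kernel difference, and the text simply cites the $L^2$ estimate \eqref{spacereg} (an expectation bound) for the pathwise bound on $\langle M\rangle_t$. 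The paper's route is shorter and avoids the $p$-optimization bookkeeping; yours is more explicit about exactly why the quadratic variation admits a deterministic pathwise bound, since it isolates the positivity argument at the one place in the chain where all kernels involved are nonnegative.
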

\begin{proof}
Define 
$$N_t(s,x):=\int_0^s\int_{-1}^1 G_{t-r}(x-y)\sigma(r,y,u(r,y))F(dydr).$$
Note that $N_t(t,x)=N(t,x)$ and $N_t(s,x)$ is a continuous $\mathcal{F}_s^F$-adapted martingale in $s$ since the integrand does not depend on $s$. Consider
$$M_s:=N_t(s,x)-N_t(s,y)=\int_0^s\int_{-1}^1 [G_{t-r}(x-z)-G_{t-r}(y-z)]\sigma(r,z,u(r,z))F(dzdr),$$
so that $M_t=N(t,x)-N(t,y)$. As $M_s$ is a continuous $\mathcal{F}_s^F$-adapted martingale with $M_0=0$, it is a time changed Brownian motion. In particular, we have
$$M_t=B_{\langle M\rangle_t},$$
and \eqref{spacereg} gives an upper bound on the time change as
$$\langle M\rangle_t\leq C\mathcal{C}_2^2\vert x-y\vert^{2\xi}.$$
Therefore, by the reflection principle for the Brownian motion $B_{\langle M\rangle_t}$,
\begin{align*}
P(N(t,x)-N(t,y)>\lambda)&=P(M_t>\lambda)=P\left(B_{\langle M\rangle_t}>\lambda\right)\\
&\leq P\left(\sup_{s\leq C\mathcal{C}_2^2\vert x-y\vert^{2\xi}}B_s>\lambda\right)=2P\left(B_{C\mathcal{C}_2^2\vert x-y\vert^{2\xi}}>\lambda\right)\\
&\leq C_1\exp\left(-\frac{C_2\lambda^2}{\mathcal{C}_2^2\vert x-y\vert^{2\xi}}\right).
\end{align*}
Switching $x$ and $y$ gives
\begin{align*}
P(-N(t,x)+N(t,y)>\lambda)&=P(M_t<-\lambda)\leq 2P\left(B_{C\mathcal{C}_2^2\vert x-y\vert^{2\xi}}<-\lambda\right)\\
&\leq C_1\exp\left(-\frac{C_2\lambda^2}{\mathcal{C}_2^2\vert x-y\vert^{2\xi}}\right).
\end{align*}
Consequently, for $\forall\xi\in(0,2\alpha)$, 
$$
P(\vert N(t,x)-N(t,y)\vert>\lambda)\leq C_1\exp\left(-\frac{C_2\lambda^2}{\mathcal{C}_2^2\vert x-y\vert^{2\xi}}\right),
$$
which completes the proof of \eqref{spacep}. 

Define 
$$R_{q_1}=\int_0^{q_1}\int_{-1}^1 [G_{t-r}(x-y)-G_{s-r}(x-y)]\sigma(r,y,u(r,y))F(dydr),$$
where $0\leq q_1\leq s$. Note $R_{q_1}$ is a continuous $\mathcal{F}_{q_1}^F$-adapted martingale with $R_0=0$. Also define
$$S_{q_2}=\int_s^{s+{q_2}}\int_{-1}^1 G_{t-r}(x-y)\sigma(r,y,u(r,y))F(dydr),$$
where $0\leq q_2\leq t-s$. Note $S_{q_2}$ is a continuous $\mathcal{F}_{q_2}^F$-adapted martingale with $S_0=0$. Thus, both $R_{q_1}$ and $S_{q_2}$ are time changed Brownian motions, i.e.,
$$R_s=B_{\langle R\rangle_s}\quad \text{and}\quad S_{t-s}=B'_{\langle S\rangle_{t-s}},$$
where $B$, $B'$ are two different Brownian motions. Note that $N(t,x)-N(s,x)=R_s+S_{t-s}$, then
$$P(N(t,x)-N(s,x)>\lambda)\leq P(R_s>\lambda/2)+P(S_{t-s}>\lambda/2).$$
\eqref{I2equ} and \eqref{I1equ} give an upper bound on the time changes as
$$\langle R\rangle_s\leq C\mathcal{C}_2^2(t-s)^{2\zeta}\quad \text{and} \quad \langle S\rangle_{t-s}\leq C\mathcal{C}_2^2(t-s)^{2\zeta}.$$
By the reflection principle for the Brownian motions $B_{\langle R\rangle_s}$ and $B'_{\langle S\rangle_{t-s}}$,
\begin{align*}
P(N(t,x)-N(s,x)>\lambda)&\leq P\left(B_{\langle R\rangle_s}>\lambda/2\right)+P\left(B'_{\langle S\rangle_{t-s}}>\lambda/2\right)\\
&\leq 2P\left(\sup_{r\leq C\mathcal{C}_2^2\vert t-s\vert^{2\zeta}}B_r>\frac{\lambda}{2}\right)=4P\left(B_{C\mathcal{C}_2^2\vert t-s\vert^{2\zeta}}>\frac{\lambda}{2}\right)\\
&\leq C_3\exp\left(-\frac{C_4\lambda^2}{\mathcal{C}_2^2\vert t-s\vert^{2\zeta}}\right).
\end{align*}
In addition,
\begin{align*}
P(-N(t,x)+N(s,x)>\lambda)&\leq P(R_s<-\lambda/2)+P(S_{t-s}<-\lambda/2)\\
&\leq 4P\left(B_{C\mathcal{C}_2^2\vert t-s\vert^{2\zeta}}<-\frac{\lambda}{2}\right)\\
&\leq C_3\exp\left(-\frac{C_4\lambda^2}{\mathcal{C}_2^2\vert t-s\vert^{2\zeta}}\right).
\end{align*}
Consequently, for $\forall\zeta\in(0,\alpha)$, 
$$
P(\vert N(t,x)-N(s,x)\vert>\lambda)\leq C_3\exp\left(-\frac{C_4\lambda^2}{\mathcal{C}_2^2\vert t-s\vert^{2\zeta}}\right),
$$
which completes the proof of \eqref{timep}.
\end{proof}

\begin{definition}
Given a grid
$$\mathbb{G}_n=\left\lbrace\left(\frac{j}{2^{2n}},\frac{k}{2^{n}}\right): 0\leq j\leq 2^{2n},0\leq k\leq2^n,j,k\in\Z\right\rbrace,$$
and we write
$$\left(t_j^{(n)},x_k^{(n)}\right)=\left(\frac{j}{2^{2n}},\frac{k}{2^n}\right).$$
Two points $\left(t_{j_i}^{(n)},x_{k_i}^{(n)}\right):i=1,2$ are called \textbf{nearest neighbors} if either
$$j_1=j_2 \quad\text{and}\quad \vert k_1-k_2\vert=1\quad\textbf{or}\quad \vert j_1-j_2\vert=1\quad\text{and}\quad k_1=k_2.$$
\end{definition}

The following lemma is analogous to Lemma 3.4 in \cite{athreya2021small} and Lemma 2.5 in \cite{foondun2022small}, which is crucial for estimating small ball probabilities.

\begin{lemma}\label{larged}There exist constants $C_5,C_6>0$ depending on $\gamma$ such that for all $\beta,\lambda,\varepsilon>0$ and $\beta\varepsilon^4\leq 1$, we have
\begin{equation*}
P\left(\sup_{\substack{0\leq t\leq\beta\varepsilon^4\\ x\in\left[0,\varepsilon^2\right]}}\vert N(t,x)\vert>\lambda\varepsilon^{2-\gamma}\right)\leq \frac{C_5}{1\wedge \sqrt{\beta}}\exp\left(-\frac{C_6\lambda^2}{ \mathcal{C}_2^2\beta^\frac{2-\gamma}{2}}\right).
\end{equation*}
\end{lemma}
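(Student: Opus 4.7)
The plan is to perform dyadic chaining on the rectangle $R := [0,\beta\varepsilon^4] \times [0,\varepsilon^2]$, exploiting the identity $N(0,x) \equiv 0$ and the parabolic scaling of the heat equation. The prefactor $1/(1\wedge\sqrt{\beta}) = \max(1,1/\sqrt{\beta})$ in the statement suggests a case split according to whether $R$ is already ``parabolically square'' (spatial length squared comparable to temporal length) or must first be decomposed into sub-rectangles of that shape.

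I would begin by fixing $\zeta \in (0,(2-\gamma)/4)$ and setting $\xi = 2\zeta \in (0,(2-\gamma)/2)$, so that under the heat-equation scaling $t \mapsto t/T_0,\, x \mapsto x/\sqrt{T_0}$ the tail bounds \eqref{spacep} and \eqref{timep} yield matching sub-Gaussian rates in both directions. On any parabolically-square box $B = [0,T_0] \times [x_0, x_0 + \sqrt{T_0}]$, I would cover $B$ by the dyadic grid with $2^{2n}$ time slabs and $2^n$ spatial cells at level $n$, apply \eqref{spacep}, \eqref{timep} to each of the $O(2^{3n})$ nearest-neighbor pairs (each of increment standard deviation $\lesssim \mathcal{C}_2 T_0^{(2-\gamma)/4} \, 2^{-n(2-\gamma)/2}$), and telescope from the base corner $(0,x_0)$ (at which $N$ vanishes) to a generic $(t,x) \in B$. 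With thresholds $a_n \asymp \lambda \mathcal{C}_2 T_0^{(2-\gamma)/4} \, 2^{-n\eta}$ for a small $\eta > 0$ making $\sum_n a_n$ convergent, the union bound over levels produces a base estimate
\[
P\!\left(\sup_{(t,x)\in B}|N(t,x)| > \lambda \mathcal{C}_2 T_0^{(2-\gamma)/4}\right) \leq C\exp\!\left(-C'\lambda^2\right).
\]

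I would then split into two regimes. For $\beta \geq 1$: apply the chain directly to $R$ with $T_0 = \beta\varepsilon^4$ (the intrinsic diameter of $R$ is governed by the temporal scale $(\beta\varepsilon^4)^{(2-\gamma)/4}$), and convert the threshold $\lambda\mathcal{C}_2 T_0^{(2-\gamma)/4} \mapsto \mu\varepsilon^{2-\gamma}$ to obtain the target with prefactor $C_5$, matching $1/(1 \wedge \sqrt{\beta}) = 1$. For $\beta < 1$: partition $[0,\varepsilon^2]$ into $\lceil 1/\sqrt{\beta}\rceil$ spatial sub-intervals of length $\sqrt{\beta}\varepsilon^2$; each resulting sub-rectangle $[0,\beta\varepsilon^4] \times [x_j,\, x_j+\sqrt{\beta}\varepsilon^2]$ is parabolically square with $T_0 = \beta\varepsilon^4$, the base estimate applies on each, and a union bound over the $\lceil 1/\sqrt{\beta}\rceil$ sub-rectangles produces the $1/\sqrt{\beta}$ prefactor.

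The main technical obstacle is pinning down the correct exponent of $\beta$. Naive chaining using \eqref{spacep}, \eqref{timep} alone yields $\beta^{2\zeta}$ in the denominator of the exponent, with $\zeta < (2-\gamma)/4$ strictly. For $\beta \geq 1$ this is harmless since $\beta^{2\zeta} \leq \beta^{(2-\gamma)/2}$ makes the chained bound sharper than needed, but for $\beta < 1$ the inequality reverses and one must complement the chaining with the self-variance estimate $\mathbb{E}[N(t,x)^2] \leq C\mathcal{C}_2^2 t^{(2-\gamma)/2}$, which (via direct Fourier-series evaluation on \eqref{noisetermexp}) holds with the exact exponent. This self-variance bound caps the intrinsic diameter of each parabolic sub-box at the true scale $\mathcal{C}_2\beta^{(2-\gamma)/4}\varepsilon^{2-\gamma}$, and it is precisely this pairing with the $\sqrt{\beta}\varepsilon^2$ spatial decomposition that dictates the $\lceil 1/\sqrt{\beta}\rceil$ union bound and hence the stated prefactor.
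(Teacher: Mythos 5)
Your overall architecture matches the paper's proof: chain over the dyadic parabolic grid $\mathbb{G}_n$ on $R = [0,\beta\varepsilon^4]\times[0,\varepsilon^2]$ using the increment tail bounds \eqref{spacep} and \eqref{timep}, anchor the chain at the origin where $N$ vanishes, and then for $\beta<1$ partition $[0,\varepsilon^2]$ into $\lceil 1/\sqrt{\beta}\rceil$ intervals of length $\sqrt{\beta}\varepsilon^2$ and apply a union bound together with spatial stationarity, which produces the $1/\sqrt{\beta}$ prefactor. Your ``parabolically-square base box'' framing is essentially the paper's introduction of the cut-off level $n_0 = \lceil\log_2(\beta^{-1/2}\varepsilon^{-2})\rceil$: below $n_0$ the grid is trivial, and above $n_0$ one has $O(2^{3(n-n_0)})$ nearest-neighbor pairs, so the two descriptions coincide.

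Where you deviate is in how you propose to recover the exponent $\beta^{(2-\gamma)/2}$ despite the H\"older exponents $\xi, \zeta$ being strictly subcritical (a genuine subtlety you are right to flag). You propose to pair the chaining with the self-variance estimate $\mathbb{E}[N(t,x)^2]\lesssim \mathcal{C}_2^2 t^{(2-\gamma)/2}$; while this is indeed an increment bound (from $(0,x)$ to $(t,x)$, since $N(0,\cdot)\equiv 0$) with the sharp exponent, the standard dyadic chaining argument only consumes nearest-neighbor increments, so it is not clear how to splice a single long-range increment into the telescoping sum without a more elaborate two-stage (first-to-a-time-slice, then-chain) argument that you do not spell out. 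The paper takes a different route: it tunes the threshold sequence as $\lambda\mathcal{M}\varepsilon^{2-\gamma}2^{-\delta_1 n}2^{\delta_2 n_0}$ with two parameters $0<\delta_1<\delta_2<\tfrac{2-\gamma}{2}$, $\delta=\delta_2-\delta_1$, and the normalization $\mathcal{M}=\tfrac{1-2^{-\delta_1}}{4\cdot 2^{\delta n_0}}$ arranged so that $\sum_n (\text{threshold}_n) = \tfrac{\lambda}{4}\varepsilon^{2-\gamma}$ exactly; the exponent mismatch is then pushed into a factor $(\beta\varepsilon^4)^\delta\leq 1$ which the paper absorbs by an infimum over $\delta$. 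Your approach is therefore not wrong in spirit, but the self-variance route as stated leaves a gap at precisely the step that the paper resolves by calibrating $\mathcal{M}$ and $\delta$, and you should replace the self-variance idea with that calibration.
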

\begin{proof}
Let us first consider the case $\beta\geq 1$. For $n\geq 0$, consider the grid
$$\mathbb{G}_n=\left\lbrace\left(\frac{j}{2^{2n}},\frac{k}{2^{n}}\right): 0\leq j\leq \beta \varepsilon^42^{2n},0\leq k\leq\varepsilon^22^n,j,k\in\Z\right\rbrace.$$
Let
\begin{equation}
\label{n0}
n_0=\left\lceil \log_2\left(\beta^{-1/2}\varepsilon^{-2}\right)\right\rceil,
\end{equation}
and for $n< n_0$, $\mathbb{G}_n$ contains only the origin. For $n\geq n_0$, the grid $\mathbb{G}_n$ has at most $\left(\beta\varepsilon^4 2^{2n}+1\right)\cdot\left(\varepsilon^2 2^n+1\right)\leq 2^5\cdot 2^{3(n-n_0)}$ many points. Because $4\zeta \wedge 2\xi<4\alpha<2-\gamma$, we can choose two parameters $0<\delta_1(\gamma)<\delta_2(\gamma)<\frac{2-\gamma}{2}$, and define
\begin{equation*}
\delta := \delta_2-\delta_1,
\end{equation*}
which satisfies the following constraint
\begin{equation}
\label{deltaconstraint}
4\zeta\wedge 2\xi=2-\gamma-2\delta.
\end{equation}

We now define another constant
\begin{equation}
\label{M}
\mathcal{M}=\frac{1-2^{-\delta_1}}{4\cdot 2^{\delta n_0}},
\end{equation}
and consider the event
$$A(n,\lambda)=\left\lbrace\vert N(p)-N(q)\vert\leq \lambda \mathcal{M}\varepsilon^{2-\gamma} 2^{-\delta_1 n}2^{\delta_2 n_0}\text{ for all nearest neighbors $p,q\in \mathbb{G}_n$}\right\rbrace.$$
If $p,q\in \mathbb{G}_n$ have different spatial variables, \eqref{spacep} implies
\[P\left(\vert N(p)-N(q)\vert> \lambda \mathcal{M}\varepsilon^{2-\gamma} 2^{-\delta_1n}2^{\delta_2n_0}\right)\leq C_1\exp\left(-\frac{C_2\lambda^2\mathcal{M}^2\varepsilon^{4-2\gamma}}{2^{-2n\xi}\mathcal{C}_2^2}2^{-2\delta_1n}2^{2\delta_2n_0}\right).\]
If $p,q\in \mathbb{G}_n$ have different time variables, \eqref{timep} implies
\[P\left(\vert N(p)-N(q)\vert> \lambda \mathcal{M}\varepsilon^{2-\gamma} 2^{-\delta_1n}2^{\delta_2n_0}\right)\leq C_3\exp\left(-\frac{C_4\lambda^2\mathcal{M}^2\varepsilon^{4-2\gamma}}{2^{-4n\zeta}\mathcal{C}_2^2}2^{-2\delta_1n}2^{2\delta_2n_0}\right).\]

Therefore, a union bound gives
\begin{align*}
&P(A^c(n,\lambda))\leq \sum_{\substack{p,q\in \mathbb{G}_n\\ \text{nearest neighbors}}}P\left(\vert N(p)-N(q)\vert> \lambda \mathcal{M}\varepsilon^{2-\gamma} 2^{-\delta_1n}2^{\delta_2n_0}\right)\\
&\leq C2^{3(n-n_0)}\exp\left(-\frac{C'\lambda^2\mathcal{M}^2\varepsilon^{4-2\gamma}}{\mathcal{C}_2^2}2^{n(4\zeta\wedge 2\xi)}2^{-2\delta_1n}2^{2\delta_2n_0}\right)\\
&=C2^{3(n-n_0)}\exp\left(-\frac{C'\lambda^2\mathcal{M}^2}{\mathcal{C}_2^2}\left(\varepsilon^{4-2\gamma} 2^{n_0(2-\gamma)}\right)2^{n(4\zeta\wedge 2\xi)}2^{-2\delta_1n}2^{-n_0(2-\gamma)}2^{2\delta_2n_0}\right)\\
&\leq C2^{3(n-n_0)}\exp\left(-\frac{C'\lambda^2\mathcal{M}^2}{\mathcal{C}_2^2\beta^{\frac{2-\gamma}{2}}}2^{(4\zeta\wedge 2\xi-2\delta_1)(n-n_0)}\right),
\end{align*}
where $C,C'$ are positive constants depending only on $\gamma$. The last inequality follows from that $\varepsilon^{4-2\gamma} 2^{n_0(2-\gamma)}\geq\beta^{-\frac{(2-\gamma)}{2}}$ by the definition of $n_0$ in \eqref{n0}, and our choices of $\delta_1,\delta_2$ and $\delta$ in \eqref{deltaconstraint}.

Let $A(\lambda)=\bigcap_{n\geq n_0}A(n,\lambda)$ and we can bound $P(A^c(\lambda))$ by summing $P(A^c(n,\lambda))$ for all $n\geq n_0$,
\begin{align*}
P\left(A^c(\lambda)\right)&\leq\sum_{n\geq n_0}P\left(A^c(n,\lambda)\right)\\
&\leq \sum_{n\geq n_0}C2^{3(n-n_0)}\exp\left(-\frac{C'\lambda^2\mathcal{M}^2}{\mathcal{C}_2^2\beta^{\frac{2-\gamma}{2}}}2^{(4\zeta\wedge 2\xi-2\delta_1)(n-n_0)}\right)\\
&\leq C_5\exp\left(-\frac{C'\lambda^2\mathcal{M}^2}{\mathcal{C}_2^2\beta^{\frac{2-\gamma}{2}}}\right).
\end{align*}
From definitions \eqref{n0} and \eqref{M}, we have
\begin{align*}
P\left(A^c(\lambda)\right)&\leq C_5\exp\left(-\frac{C'\lambda^22^{-2\delta n_0}}{\mathcal{C}_2^2\beta^{\frac{2-\gamma}{2}}}\right)\\
&\leq C_5\exp\left(-\frac{C_6\lambda^2\left(\beta\varepsilon^4\right)^\delta}{\mathcal{C}_2^2\beta^{\frac{2-\gamma}{2}}}\right)
\end{align*}
for any $\delta\in\left(0,\frac{2-\gamma}{2}\right)$, we thus obtain
\begin{align*}
P\left(A^c(\lambda)\right)&\leq \inf_{\delta\in\left(0,\frac{2-\gamma}{2}\right)}C_5\exp\left(-\frac{C_6\lambda^2\left(\beta\varepsilon^4\right)^\delta}{\mathcal{C}_2^2\beta^{\frac{2-\gamma}{2}}}\right)\\
&\leq C_5\exp\left(-\frac{C_6\lambda^2}{\mathcal{C}_2^2\beta^{\frac{2-\gamma}{2}}}\right),
\end{align*}
where both $C_5$ and $C_6$ depend on $\gamma.$

Now we consider a point $(t,x)$, which is in a grid $\mathbb{G}_n$ for some $n \geq n_0$. From arguments similar to page 128 of \cite{dalang2009minicourse}, we can find a sequence of points from the origin to $(t,x)$ as $(0,0)= p_0,p_1,...,p_k = (t,x)$ such that each pair is the nearest neighbor in some grid $G_m, n_0\leq m \leq n$, and at most 4 such pairs are nearest neighbors in any given grid. On the event $A(\lambda)$, we have
\begin{align*}
\vert N(t,x)\vert&\leq \sum_{j=0}^{k-1}\vert N(p_j)-N(p_{j+1})\vert\leq 4\sum_{n\geq n_0}\lambda \mathcal{M}\varepsilon^{2-\gamma} 2^{-\delta_1n}2^{\delta_2n_0}\\
&= 4\lambda\varepsilon^{2-\gamma}2^{\delta_2n_0}\frac{1-2^{-\delta_1}}{4\cdot 2^{\delta n_0}}\sum_{n\geq n_0}2^{-\delta_1n}\\
&\leq\lambda\varepsilon^{2-\gamma}.
\end{align*}

Points in $\bigcup\limits_n\mathbb{G}_n$ are dense in $\left[0,\beta\varepsilon^4\right]\times\left[0,\varepsilon^2\right]$, and we may extend $N(t,x)$ to a continuous version. Therefore, for $\beta\geq 1$,
$$
P\left(\sup_{\substack{0\leq t\leq\beta\varepsilon^4\\ x\in\left[0,\varepsilon^2\right]}}\vert N(t,x)\vert>\lambda\varepsilon^{2-\gamma}\right)\leq C_5\exp\left(-\frac{C_6\lambda^2}{\mathcal{C}_2^2\beta^{\frac{2-\gamma}{2}}}\right).
$$
For $0<\beta<1$, we divide the interval $[0,\varepsilon^2]$ into $\frac{1}{\sqrt{\beta}}$ pieces each of length $\sqrt{\beta}\varepsilon^2$, and then a union bound implies that
\[P\left(\sup_{\substack{0\leq t\leq\beta\varepsilon^4\\ x\in\left[0,\varepsilon^2\right]}}\vert N(t,x)\vert>\lambda\varepsilon^{2-\gamma}\right)\leq \frac{1}{\sqrt{\beta}} P\left(\sup_{\substack{0\leq t\leq\beta\varepsilon^4\\ x\in\left[0,\sqrt{\beta}\varepsilon^2\right]}}\vert N(t,x)\vert>\lambda\varepsilon^{2-\gamma}\right)\]
\[=\frac{1}{\sqrt{\beta}}P\left(\sup_{\substack{0\leq t\leq(\sqrt{\beta}\varepsilon^2)^2\\ x\in\left[0,\sqrt{\beta}\varepsilon^2\right]}}\vert N(t,x)\vert>\frac{\lambda}{\beta^{\frac{2-\gamma}{4}}}\left(\beta^{1/4}\varepsilon\right)^{2-\gamma}\right)\leq \frac{C_5}{\sqrt{\beta}}\exp\left(-\frac{C_6\lambda^2}{ \mathcal{C}_2^2\beta^{\frac{2-\gamma}{2}}}\right).\]
As a result,
\[
P\left(\sup_{\substack{0\leq t\leq\beta\varepsilon^4\\ x\in \left[0,\varepsilon^2\right]}}\vert N(t,x)\vert>\lambda\varepsilon^{2-\gamma}\right)\leq \frac{C_5}{1\wedge \sqrt{\beta}}\exp\left(-\frac{C_6\lambda^2}{\mathcal{C}_2^2\beta^{\frac{2-\gamma}{2}}}\right).
\]
\end{proof}

\begin{remark}
\label{largeremark}
If we assume that $\sigma$ in \eqref{noiseterm} satisfies $\vert \sigma(s,y,u)\vert\leq C\left(\beta\varepsilon^4\right)^{\frac{2-\gamma}{4}}$, then the probability in Lemma \ref{larged} is bounded above by
\[\frac{C_5}{1\wedge \sqrt{\beta}}\exp\left(-\frac{C_6\lambda^2}{(\beta\varepsilon^2)^{2-\gamma}}\right),\]
which can be proved similarly to the above lemma.
\end{remark}

\section{Proof of Proposition \ref{prop}}\label{proofprop}
The following lemma gives a lower bound for variance of the noise term $N(t_1,x_k)$ and an upper bound on the decay of covariance between two random variables $N(t_1,x_k), N(t_1,x_{k'})$ as $\vert k-k'\vert$ increases.
\begin{lemma}
\label{varbound}
Consider noise terms $N(t_1,x_k),N(t_1,x_{k'})$ and a deterministic $\sigma(t,x,u)=\sigma(t,x)$, then there exist positive constants $C_7,C_8$ and $C_9$ depending only on $\mathcal{C}_1,\mathcal{C}_2$ and $\gamma$, such that
\begin{equation*}
C_7t_1^{\frac{2-\gamma}{2}}\leq {\rm Var}(N(t_1,x_k))\leq C_8t_1^{\frac{2-\gamma}{2}}
\end{equation*}
and
\begin{equation*}
{\rm Cov}(N(t_1,x_k),N(t_1,x_{k'}))\leq C_9t_1\left\vert (k-k')\varepsilon^2\right\vert^{-\gamma}.
\end{equation*}
\end{lemma}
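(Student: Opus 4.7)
The plan is to reduce both moments to deterministic integrals via Walsh's Itô isometry for the stochastic integral against the colored noise $F$. Using \eqref{Fdef}, one finds
\[
\mathrm{Cov}(N(t_1,x_k), N(t_1,x_{k'})) = \int_0^{t_1}\iint_{[-1,1]^2} G_{t_1-s}(x_k-y)\, G_{t_1-s}(x_{k'}-z)\, \sigma(s,y)\sigma(s,z)\,\Lambda(y-z)\, dy\, dz\, ds,
\]
with $\mathrm{Var}(N(t_1,x_k))$ recovered as the case $k = k'$. Since $G \geq 0$, $\Lambda \geq 0$, and $\mathcal{C}_1 \leq \sigma \leq \mathcal{C}_2$ by \eqref{assume2}, the $\sigma$ factors can be replaced by the appropriate constant in each direction, so both estimates reduce to computations involving only $G$ and $\Lambda$.

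For the variance lower bound, after bounding $\sigma \geq \mathcal{C}_1$, I will expand $\Lambda$ via its Fourier series \eqref{lambdafourier} together with the eigenfunction expansion of the heat kernel on $\mathbb{T}$. Since $\int_{-1}^{1} G_{t_1-s}(x_k-y)\, e^{i\pi n y}\, dy = e^{i\pi n x_k}\, e^{-\pi^2 n^2 (t_1-s)}$, the double spatial integral collapses to $\sum_n q_n\, e^{-2\pi^2 n^2 (t_1-s)}$, independent of $x_k$ by translation invariance. Integrating in $s$ produces $\sum_n q_n (1 - e^{-2\pi^2 n^2 t_1})/(2\pi^2 n^2)$. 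Using $q_n \geq C n^{\gamma-1}$ from \eqref{qnbound} and retaining only indices with $n \gtrsim t_1^{-1/2}$ (so that $1 - e^{-2\pi^2 n^2 t_1} \geq 1/2$), the estimate $\sum_{n \geq N} n^{-(3-\gamma)} \sim N^{-(2-\gamma)}$ at $N \sim t_1^{-1/2}$ yields the desired bound $C_7\, t_1^{(2-\gamma)/2}$.

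For the covariance upper bound, after bounding $\sigma \leq \mathcal{C}_2$ I will use the heat semigroup identity $G_\tau * G_\tau = G_{2\tau}$ to rewrite the double spatial convolution as $(G_{2(t_1-s)} * \Lambda)(x_k - x_{k'})$. Writing $r = (k-k')\varepsilon^2 \neq 0$ and $\tau = t_1 - s$, I will split $\int G_{2\tau}(y)\,|r-y|^{-\gamma}\, dy$ into the near region $|y| \leq |r|/2$ and the far region $|y| > |r|/2$. On the near region $|r-y|^{-\gamma} \leq 2^\gamma |r|^{-\gamma}$ and $\int G_{2\tau} \leq 1$, giving a direct $|r|^{-\gamma}$ bound. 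On the far region, the Gaussian tail $e^{-|r|^2/(32\tau)}$ must suppress the $\tau^{-1/2}$ blow-up of the heat kernel; the parameter regime $\tau \leq t_1 = \mathcal{C}_0 \varepsilon^{4/\gamma} \ll \varepsilon^4 \leq r^2$ (which holds for $\gamma < 1$ and $\varepsilon$ small by \eqref{c0}) makes this saving quantitative. Integrating in $s$ then gives the claimed $C_8\, t_1\, |r|^{-\gamma}$.

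The main obstacle is the far-region contribution just described: a naive sup-norm bound on $G_{2\tau}$ costs $\tau^{-1/2}$ and, after integration in $s$, yields a term of order $t_1^{1/2}$ that can be larger than the target $t_1\, |r|^{-\gamma}$. The Gaussian factor $e^{-|r|^2/(32\tau)}$ is what ultimately kills this, but comparing it quantitatively to the Riesz singularity requires one to exploit the specific scaling $t_1 \ll r^2$ enforced by \eqref{c0} and the grid spacing $|r| \geq \varepsilon^2$. The variance lower bound, by contrast, reduces to a routine estimate on a zeta-type sum once the Fourier reduction is carried out.
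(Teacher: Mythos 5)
Your proposal is correct but takes a genuinely different route from the paper for the covariance upper bound, and a mildly different route for the variance lower bound. Both start from the Walsh isometry and reduce the double spatial integral via the Fourier series $\Lambda(x)=\sum_n q_n e^{in\pi x}$ and the eigenfunction expansion of $G$. For the variance lower bound, the paper compares the sum $\sum_n q_n e^{-2\pi^2 n^2(t_1-s)}$ to the integral $\int_0^\infty x^{\gamma-1}e^{-2\pi^2 x^2 (t_1-s)}\,dx$ and evaluates via the substitution $w = 2\pi^2 x^2 t_1$ to obtain $C t_1^{(2-\gamma)/2}$; your truncation to $n\gtrsim t_1^{-1/2}$ followed by the tail estimate $\sum_{n\geq N}n^{\gamma-3}\sim N^{\gamma-2}$ is an essentially equivalent argument. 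For the covariance upper bound, however, the paper stays entirely in Fourier space: it writes the time-integrated covariance as $\sum_n q_n \frac{1-e^{-2\pi^2 n^2 t_1}}{2\pi^2 n^2}\cos(n\pi r)$ and replaces $\frac{1-e^{-2\pi^2n^2 t_1}}{2\pi^2 n^2}$ by $t_1$ term by term, then re-sums the Fourier series to $t_1 \Lambda(r) = t_1 |r|^{-\gamma}$. Your approach instead uses the semigroup identity $G_\tau * G_\tau = G_{2\tau}$ to reduce the covariance to $\int_0^{t_1}(G_{2(t_1-s)}*\Lambda)(r)\,ds$ in physical space, then splits into near ($|y|\leq|r|/2$) and far ($|y|>|r|/2$) regions, handling the far region with the Gaussian tail $e^{-r^2/(32\tau)}$ and the scaling $t_1 \ll r^2$ enforced by \eqref{c0} and the grid spacing $|r|\geq\varepsilon^2$. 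Your version is arguably more robust: the paper's term-by-term replacement $\frac{1-e^{-x}}{x}q_n\cos(n\pi r)\leq q_n\cos(n\pi r)$ is not manifestly valid when $\cos(n\pi r)<0$, whereas your real-space argument works directly with a nonnegative integrand. The trade-off is that your argument requires $\varepsilon$ small (so that $t_1 = \mathcal{C}_0\varepsilon^{4/\gamma}\ll\varepsilon^4\leq r^2$), while the paper's Fourier bound, if granted, holds uniformly; since the lemma is only invoked for $\varepsilon<\varepsilon_1$ in Proposition \ref{prop}, this restriction is harmless in context. Your diagnosis of the far-region term as the genuine obstacle is accurate.
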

\begin{proof} We use the assumption of $\sigma$ in \eqref{assume2} to achieve
\begin{align*}
&\textrm{Var}(N(t_1,x_k))\\
&=\int_0^{t_1}\int_{-1}^1\int_{-1}^1G_{t_1-s}(x_k-y)G_{t_1-s}(x_k-z)\sigma(s,y)\sigma(s,z)\Lambda(y-z)dydzds\\
&\geq \mathcal{C}_1^2\int_0^{t_1}\sum_{n=0}^\infty q_ne^{-2\pi^2n^2(t_1-s)}ds\\
&\geq C\mathcal{C}_1^2\left(\int_0^{t_1}\int_0^\infty x^{\gamma-1}e^{-2\pi^2x^2(t_1-s)}dxds\right).
\end{align*}

The last inequality follows from that $q_n$ in \eqref{qn} and the fact that $n^{\gamma-1}e^{-2\pi^2n^2(t_1-s)}$ decreases as $n$ increases. Using Fubini's theorem and changing variable with $w=2\pi^2 x^2 t_1$, we continue as follows,
$$\textrm{Var}(N(t_1,x_k))\geq C\mathcal{C}_1^2\int_0^\infty\frac{1-e^{-2\pi^2x^2t_1}}{2\pi^2x^{3-\gamma}}dx=C\mathcal{C}_1^2t_1^{\frac{2-\gamma}{2}}\int_0^\infty \frac{1-e^{-w}}{w^{\frac{4-\gamma}{2}}}dw\geq C_7t_1^{\frac{2-\gamma}{2}}.$$
The last integral converges with $\gamma\in(0,1)$, which completes the proof of the lower bound. 

On the other hand,
\begin{align*}
&\textrm{Var}(N(t_1,x_k))\\
&=\int_0^{t_1}\int_{-1}^1\int_{-1}^1G_{t_1-s}(x_k-y)G_{t_1-s}(x_k-z)\sigma(s,y)\sigma(s,z)\Lambda(y-z)dydzds\\
&\leq \mathcal{C}_2^2\int_0^{t_1}\sum_{n=0}^\infty q_ne^{-2\pi^2n^2(t_1-s)}ds\\
&\leq C'\mathcal{C}_2^2\left(q_0t_1^\frac{2-\gamma}{2}+\int_0^{t_1}\int_0^\infty x^{\gamma-1}e^{-2\pi^2x^2(t_1-s)}dxds\right).
\end{align*}
The last inequality follows from that $q_n$ in \eqref{qn} and the fact that $t_1<t_1^{\frac{2-\gamma}{2}}$ for $t_1<1$. Using the same argument as above, we end up with 
$$\textrm{Var}(N(t_1,x_k))\leq C_8t_1^{\frac{2-\gamma}{2}}.$$
In addition, we use the fact $1-e^{-x}\leq x$ to derive the upper bound of covariance between $N(t_1,x_k)$ and $N(t_1,x_{k'})$ when $k\neq k'$,
\begin{align*}
&\textrm{Cov}(N(t_1,x_k),N(t_1,x_{k'}))\\
&\leq \mathcal{C}_2^2\left(\sum_{n=0}^\infty \E\left[\left(\int_0^{t_1}e^{-\pi^2n^2(t_1-s)}\beta_n(ds)\right)^2\right] q_n\cos\left(n\pi (k-k')\varepsilon^2\right)\right)\\
&\leq \mathcal{C}_2^2\left(t_1q_0+\sum_{n=1}^\infty \frac{1-e^{-2\pi^2n^2t_1}}{2\pi^2n^2}q_n\cos\left(n\pi (k-k')\varepsilon^2\right)\right)\\
&\leq Ct_1\sum_{n=0}^\infty q_n\cos(n\pi (k-k')\varepsilon^2)\leq C_9t_1\vert (k-k')\varepsilon^2\vert^{-\gamma},
\end{align*}
\end{proof}
where the last equality follows by the definition of $\Lambda(x)$ from \eqref{lambdafourier} and \eqref{Lambdabound}.

\textbf{Proof of Proposition \ref{prop}(a)} Recall $F_n$ in \eqref{Fn}. The Markov property of $u(t,\cdot)$ (see page 247 in \cite{da2014stochastic}) implies
\begin{equation*}
P\left(F_{j}\vert\sigma\{u(t_i,\cdot)\}_{0\leq i<j}\right)=P\left(F_{j}\vert u(t_{j-1},\cdot)\right).
\end{equation*}
If we can show that $P\left(F_{j}\vert u(t_{j-1},\cdot)\right)$ has a uniform bound $\textbf{C}_4\exp\left(-\frac{\textbf{C}_5}{t_1^{\gamma/2}}\right)$ that does not depend on $j$, then the same bound holds for the conditional probability $P\left(F_{j}\Big\vert \bigcap\limits_{k=0}^{j-1}F_{k}\right)$, which is conditioned on a realization of $u(t_k,\cdot),0\leq k<j$. Thus, it is enough to prove
\begin{equation}
\label{F1}
P\left(F_{1}\right)\leq\textbf{C}_4\exp\left(-\frac{\textbf{C}_5}{t_1^{\gamma/2}}\right),
\end{equation}
where $\textbf{C}_4$ and $\textbf{C}_5$ do not depend on $u_0$ and $\vert u_0(x)\vert<t_1^{(2-\gamma)/4}$ for every grid point $x$ in (3.3). 

Now we show how to find a uniform bound for $P(F_1)$ with starting from considering the truncated function
$$f_\varepsilon(x)=\begin{cases}
x & \vert x\vert\leq t_1^{\frac{2-\gamma}{4}}\\
\frac{x}{\vert x\vert}\cdot t_1^{\frac{2-\gamma}{4}} & \vert x\vert> t_1^{\frac{2-\gamma}{4}}
\end{cases},$$
and, particularly, we have $\vert f_\varepsilon(x)\vert\leq t_1^{\frac{2-\gamma}{4}}$. Consider the following two equations
\begin{equation*}
\label{shef}
\partial_tv(t,x)=\partial_x^2v(t,x)+\sigma(t,x,f_\varepsilon(v(t,x))) \dot{F}(t,x)
\end{equation*}
and
\begin{equation}
\label{shefreeze}
\partial_tv_g(t,x)=\partial_x^2v_g(t,x)+\sigma(t,x,f_\varepsilon(u_0(x))) \dot{F}(t,x)
\end{equation}
with the same initial $u_0(x)$. We can decompose $v(t,x)$ by
\[v(t,x)=v_g(t,x)+D(t,x)\]
with
\[D(t,x)=\int_0^t\int_{-1}^1 G_{t-s}(x-y)[\sigma(s,y,f_\varepsilon(v(s,y)))-\sigma(s,y,f_\varepsilon(u_0(y)))]F(dyds).\]

We define a new sequence of events for $n=1$,
\[H_j=\left\lbrace\vert v(t_1,x_j)\vert\leq t_1^{\frac{2-\gamma}{4}}\right\rbrace.\]
Clearly, the property of $f_\varepsilon(x)$ implies
$$F_{1}=\bigcap_{j=-n_1+1}^{n_1-1}H_j.$$
Moreover, we define another two sequences of events
$$A_j=\left\lbrace\vert v_g(t_1,x_j)\vert\leq 2t_1^{\frac{2-\gamma}{4}}\right\rbrace\quad\text{and}\quad B_j=\left\lbrace\vert D(t_1,x_j)\vert>t_1^{\frac{2-\gamma}{4}}\right\rbrace.$$
If $\vert D(t_1,x_j)\vert\leq t_1^{\frac{2-\gamma}{4}}$ and $\vert v_g(t_1,x_j)\vert> 2t_1^{\frac{2-\gamma}{4}}$, then $\vert v(t_1,x_j)\vert>t_1^{\frac{2-\gamma}{4}}$. As a result,
$$H_j^c\supset A_j^c\cap B_j^c,$$
which leads to
\begin{equation}
\label{sumprob}
\begin{split}
P(F_{1})&= P\left(\bigcap_{j=-n_1+1}^{n_1-1}H_j\right)\leq P\left(\bigcap_{j=-n_1+1}^{n_1-1}[A_j\cup B_j]\right)\\
&\leq P\left(\left(\bigcap_{j=-n_1+1}^{n_1-1}A_j\right)\bigcup\left(\bigcup_{j=-n_1+1}^{n_1-1}B_j\right)\right)\\
&\leq P\left(\bigcap_{j=-n_1+1}^{n_1-1}A_j\right)+P\left(\bigcup_{j=-n_1+1}^{n_1-1}B_j\right)\\
&\leq P\left(\bigcap_{j=-n_1+1}^{n_1-1}A_j\right)+\sum_{j=-n_1+1}^{n_1-1} P(B_j),
\end{split}
\end{equation}
where the second inequality can be shown by using induction. 

Recall the Lipschitz property on the third variable of $\sigma(t,x,u)$ and $\vert f_\varepsilon(v(t,x))-f_\varepsilon(u_0(x))\vert\leq 2t_1^{\frac{2-\gamma}{4}}$. Applying Remark \ref{largeremark} on $D(t,x)$ will give the inequality
\begin{equation*}
P(B_j)\leq \frac{C_5}{1\wedge \sqrt{c_0}}\exp\left(-\frac{1}{4}\mathcal{D}^{-2}C_6t_1^{-\frac{2-\gamma}{2}}\right),
\end{equation*}
so that a bound \eqref{jbound} on $j$ yields a union bound of probabilities
\begin{equation}
\label{probB}
P\left(\bigcup_{j=-n_1+1}^{n_1-1}B_j\right) \leq \sum_{j=-n_1+1}^{n_1-1}P(B_j)\leq\frac{C_5}{(1\wedge \sqrt{c_0})\varepsilon^2}\exp\left(-\frac{C_6}{ 4\mathcal{D}^2t_1^{1-\gamma/2}}\right).
\end{equation}

We define a sequence of events involving $v_g$ to compute the upper bound for $P\left(\bigcap\limits_{j=-n_1+1}^{n_1-1}A_j\right)$, 
$$I_j=\left\lbrace\vert v_g(t_1,x_k)\vert\leq 2t_1^{\frac{2-\gamma}{4}}\text{~for all $-n_1+1\leq k\leq j$}\right\rbrace,\quad I_{-n_1}=\Omega.$$
Then we can write $P\left(\bigcap\limits_{j=-n_1+1}^{n_1-1}A_j\right)$ in terms of conditional probabilities,
\begin{equation}
\label{Acond}
P\left(\bigcap_{j=-n_1+1}^{n_1-1}A_j\right)=P(I_{n_1-1})=P(I_{-n_1})\prod_{j=-n_1+1}^{n_1-1}\frac{P(I_j)}{P(I_{j-1})}=\prod_{j=-n_1+1}^{n_1-1}P(I_j\vert I_{j-1}).
\end{equation}
Let $\mathcal{G}_j$ be the $\sigma-$algebra generated by 
$$N_\varepsilon(t_1,x_k):=\int_0^{t_1}\int_{-1}^1 G(t_1-s,x_k-y)\sigma(s,y,f_\varepsilon(u_0(y)))F(dyds),\quad -n_1+1\leq k\leq j,$$
where $N_\varepsilon(t_1,x_k)$ is the noise term of $v_g(t_1,x_k)$ in \eqref{shefreeze}. If we can show that there is a uniform bound for $P(I_j\vert \mathcal{G}_{j-1})$, then the bound holds for the conditional probability $P(I_j\vert I_{j-1})$, which is conditioned on a realization of $N_\varepsilon(t_1,x_k),k=-n_1+1,...,j-1$. Notice that $\sigma(s,y,f_\varepsilon(u_0(y)))$ is deterministic and $\sigma$ is uniformly bounded. By Lemma \ref{varbound}, we have
\begin{equation*}
\textrm{Var}(N_\varepsilon(t_1,x_k))\geq C_7 t_1^{\frac{2-\gamma}{2}},
\end{equation*}
and one can decompose
$$v_g(t_1,x_j)=\int_{-1}^{1}G_{t_1}(x-y)u_0(y)dy+X+Y,$$
where $X=\E[N_\varepsilon(t_1,x_j)\vert \mathcal{G}_{j-1}]$ is a Gaussian random variable, which can be written as
$$X=\sum_{k=-n_1+1}^{j-1}\eta_k^{(j)}N_\varepsilon(t_1,x_k)$$
for some coefficients $\left(\eta_k^{(j)}\right)_{k=-n_1+1}^{j-1}$. Then the conditional variance equals
\begin{align*}
&\textrm{Var}(Y\vert\mathcal{G}_{j-1})=\E [(N_\varepsilon(t_1,x_j)-X)^2\vert\mathcal{G}_{j-1}]-(\E[N_\varepsilon(t_1,x_j)-X\vert\mathcal{G}_{j-1}])^2\\
&=\E[(N_\varepsilon(t_1,x_j)-\E[N_\varepsilon(t_1,x_j)\vert \mathcal{G}_{j-1}])^2\vert\mathcal{G}_{j-1}]=\textrm{Var}(N_\varepsilon(t_1,x_j)\vert \mathcal{G}_{j-1}).
\end{align*}
Since $Y=N_\varepsilon(t_1,x_j)-X$ is independent of $\mathcal{G}_{j-1}$, we write $\textrm{Var}(Y)$ as
$$\textrm{Var}(Y)=\textrm{Var}(Y\vert\mathcal{G}_{j-1})=\textrm{Var}(N_\varepsilon(t_1,x_j)\vert \mathcal{G}_{j-1}).$$
In fact, for a Gaussian random variable $Z \sim N(\mu,\sigma^2)$ and any $a > 0$, the probability $P(\vert Z\vert \leq a)$ is maximized when $\mu = 0$, thus
\begin{equation*}
P(I_j\vert \mathcal{G}_{j-1})\leq P\left(\vert v_g(t_1,x_j)\vert\leq 2t_1^{\frac{2-\gamma}{4}}\bigg| \mathcal{G}_{j-1}\right)\leq P\left(\vert Z'\vert\leq \frac{2t_1^{\frac{2-\gamma}{4}}}{\sqrt{\textrm{Var}(N_\varepsilon(t_1,x_j)\vert \mathcal{G}_{j-1})}}\right)
\end{equation*}
where $Z'\sim N(0,1)$. Let's use the notation $\textrm{SD}$ to denote the standard deviation of a random variable. By the Minkowski inequality,
$$\textrm{SD}(X)\leq \sum_{k=-n_1+1}^{j-1}\left\vert\eta_k^{(j)}\right\vert\cdot \textrm{SD}(N_\varepsilon(t_1,x_k)),$$
and
$$\textrm{SD}(N_\varepsilon(t_1,x_j))\leq \textrm{SD}(X)+\textrm{SD}(Y).$$
If we can control coefficients by restricting
$$\sum_{k=-n_1+1}^{j-1}\left\vert\eta_k^{(j)}\right\vert<\frac{C_7}{C_8},$$
then by Lemma \ref{varbound}, we obtain
\begin{equation*}
\begin{split}
\textrm{SD}(X)&\leq\sum_{k=-n_1+1}^{j-1}\left\vert\eta_k^{(j)}\right\vert\cdot \textrm{SD}(N_\varepsilon(t_1,x_k))\leq\left(\sum_{k=-n_1+1}^{j-1}\left\vert\eta_k^{(j)}\right\vert\right)\cdot\sup_{-n_1+1\leq k<j}\textrm{SD}(N_\varepsilon(t_1,x_k))\\
&<\frac{C_7}{C_8}\cdot C_8t_1^{\frac{2-\gamma}{4}}=C_7t_1^{\frac{2-\gamma}{4}}.
\end{split}
\end{equation*}
Therefore, $\textrm{SD}(Y)$ can be bounded below by
\begin{equation*}
\textrm{SD}(Y)\geq \textrm{SD}(N_\varepsilon(t_1,x_j))-\textrm{SD}(X)> Ct_1^{\frac{2-\gamma}{4}},
\end{equation*}
so that we can derive the uniform upper bound of $P(I_j\vert \mathcal{G}_{j-1})$,
\begin{equation*}
\label{probA}
\begin{split}
P(I_j\vert \mathcal{G}_{j-1})&\leq P\left(\vert Z'\vert\leq \frac{2t_1^{\frac{2-\gamma}{4}}}{\sqrt{\textrm{Var}(N_\varepsilon(t_1,x_j)\vert \mathcal{G}_{j-1})}}\right)\\
&\leq P\left(\vert Z'\vert\leq \frac{2t_1^{\frac{2-\gamma}{4}}}{\sqrt{Ct_1^{\frac{2-\gamma}{2}}}}\right)\\
&=P(\vert Z'\vert\leq C').
\end{split}
\end{equation*}
A bound \eqref{jbound} on $j$ and \eqref{Acond} together yield
\begin{equation}
\label{probAbound}
P\left(\bigcap_{j=-n_1+1}^{n_1-1}A_j\right)\leq C^{\varepsilon^{-2}}=C\exp\left(-\frac{C'}{\varepsilon^2}\right).
\end{equation}
The following lemma shows how to select $c_0$ to make $\sum\limits_{k=-n_1+1}^{j-1}\left\vert\eta_k^{(j)}\right\vert< \frac{C_7}{C_8}$, which completes the proof.

\begin{lemma}\label{coeffbound}
For a given $\varepsilon>0$, we may choose $c_0>0$ in \eqref{c0} such that
\begin{equation*}
\sum_{k=-n_1+1}^{j-1}\left\vert\eta_k^{(j)}\right\vert< \frac{C_7}{C_8}.
\end{equation*}
\end{lemma}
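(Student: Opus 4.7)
The plan is to identify $\eta^{(j)}=(\eta_k^{(j)})_{k=-n_1+1}^{j-1}$ as the solution of the normal equations $\Sigma\eta^{(j)}=c$, where $\Sigma_{k,k'}=\mathrm{Cov}(N_\varepsilon(t_1,x_k),N_\varepsilon(t_1,x_{k'}))$ and $c_k=\mathrm{Cov}(N_\varepsilon(t_1,x_j),N_\varepsilon(t_1,x_k))$, both restricted to indices in $\{-n_1+1,\ldots,j-1\}$. The aim is to show that $\Sigma$ becomes strictly diagonally dominant once $\mathcal{C}_0$ is chosen small enough, and then to control $\|\eta^{(j)}\|_1$ through an operator-norm estimate on $\Sigma^{-1}$.

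By Lemma \ref{varbound}, the diagonal entries of $\Sigma$ are bounded below by $C_7 t_1^{(2-\gamma)/2}$, while the off-diagonal entries of $\Sigma$ and the entries of $c$ are bounded by $C_8 t_1 |(k-k')\varepsilon^2|^{-\gamma}$ (note that the covariances are in fact non-negative because $\Lambda$, $G$, and $\sigma$ are all non-negative). Since $\gamma\in(0,1)$ and $n_1\leq 2\varepsilon^{-2}$, a straightforward Riemann-sum estimate gives $\sum_{k'\neq k}|(k-k')\varepsilon^2|^{-\gamma}\leq C\varepsilon^{-2\gamma}n_1^{1-\gamma}\leq C\varepsilon^{-2}$, and consequently $\sum_{k'\neq k}|\Sigma_{k,k'}|\leq C t_1\varepsilon^{-2}$ and $\|c\|_1\leq Ct_1\varepsilon^{-2}$. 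The ratio of off-diagonal row sum to diagonal entry is therefore of order $t_1^{\gamma/2}\varepsilon^{-2}$; substituting $t_1=c_0\varepsilon^4$ with $c_0=\mathcal{C}_0\varepsilon^{(4-4\gamma)/\gamma}$ collapses this to the $\varepsilon$-independent quantity $C\mathcal{C}_0^{\gamma/2}$, which can be made arbitrarily small by shrinking $\mathcal{C}_0$. Choosing $\mathcal{C}_0$ so that this ratio is at most $1/2$ makes $\Sigma$ strictly diagonally dominant with margin $\delta\geq(C_7/2)t_1^{(2-\gamma)/2}$.

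Since $\Sigma$ is symmetric, the classical Varah/Gershgorin-type inverse bound yields $\|\Sigma^{-1}\|_{1\to 1}=\|\Sigma^{-1}\|_{\infty\to\infty}\leq 1/\delta$, and hence $\|\eta^{(j)}\|_1\leq\|\Sigma^{-1}\|_{1\to 1}\|c\|_1\leq Ct_1^{\gamma/2}\varepsilon^{-2}=C\mathcal{C}_0^{\gamma/2}$. A further reduction of $\mathcal{C}_0$ forces the right-hand side to be at most $1/2$; the resulting threshold value is exactly the constant $\mathcal{C}$ appearing in the statement.

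The main obstacle is the $\ell^1\to\ell^1$ operator-norm step at the end. The Varah inequality for strictly diagonally dominant matrices is usually phrased as $\|A^{-1}\|_{\infty\to\infty}\leq 1/\delta$, and it is the symmetry of the covariance matrix that lets us transfer the bound to the $\ell^1$ norm via $\|A^{-1}\|_{1\to 1}=\|(A^{-1})^T\|_{\infty\to\infty}=\|A^{-1}\|_{\infty\to\infty}$. A secondary but conceptually important point is that the seemingly peculiar scaling $c_0\sim\varepsilon^{(4-4\gamma)/\gamma}$ prescribed by \eqref{c0} is not accidental: it is the unique rate that makes the diagonal-dominance ratio (and thus the final bound on $\|\eta^{(j)}\|_1$) depend only on $\mathcal{C}_0$ and not on $\varepsilon$, which is the reason this choice of $c_0$ works across all the estimates in the paper.
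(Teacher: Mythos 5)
Your proof is correct, and it reaches the same endpoint—an $\ell^1$ bound on $\eta^{(j)}$ via an operator-norm bound on $\Sigma^{-1}$—by a slightly different route from the paper. The paper factors the covariance matrix as $\Sigma = \mathbf{D}\mathbf{T}\mathbf{D}$ with $\mathbf{D}$ the diagonal of standard deviations, writes the correlation matrix as $\mathbf{T} = \mathbf{I}-\mathbf{A}$ with $\mathbf{A}$ having zero diagonal, and bounds $\Vert\mathbf{T}^{-1}\Vert_{1,1}\le (1-\Vert\mathbf{A}\Vert_{1,1})^{-1}$ by a Neumann-series argument, then scales back by $\Vert\mathbf{D}^{-1}\Vert_{1,1}^2$. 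You instead show strict diagonal dominance of the unnormalized $\Sigma$ and invoke Varah's inequality to get $\Vert\Sigma^{-1}\Vert_{\infty\to\infty}\le \delta^{-1}$, transferring to $\Vert\cdot\Vert_{1\to 1}$ by the symmetry of $\Sigma$. These are two dressings of the same estimate: for a symmetric matrix with unit diagonal, the Neumann bound in the $\ell^1$ operator norm \emph{is} the column-sum form of Varah, and the paper's $\Vert\mathbf{D}^{-1}\Vert^2$ factor is exactly your margin $\delta$ after normalization. The one thing your route requires that the paper's sidesteps by construction is the $\ell^1\leftrightarrow\ell^\infty$ transfer; you handle this correctly via $\Vert\Sigma^{-1}\Vert_{1\to 1}=\Vert(\Sigma^{-1})^T\Vert_{\infty\to\infty}=\Vert\Sigma^{-1}\Vert_{\infty\to\infty}$. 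Your aside that the covariances are non-negative (so the upper bound in Lemma \ref{varbound} is genuinely a bound on the modulus) is a point the paper uses silently and is worth having made explicit. Your closing observation that the exponent in \eqref{c0} is forced precisely so that $t_1^{\gamma/2}\varepsilon^{-2}$ collapses to $\mathcal{C}_0^{\gamma/2}$ matches the role the paper assigns to $c_0$ and correctly identifies why the constant $\mathcal{C}$ arises as a threshold for $\mathcal{C}_0$.
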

\begin{proof}
Since $Y$ and $\mathcal{G}_{j-1}$ are independent, 
$$\textrm{Cov}(Y,N_\varepsilon(t_1,x_k))=0\text{~for $k=-n_1+1,...,j-1,$}$$
and for $l=-n_1+1,...,j-1$, we have
\begin{equation*}
\textrm{Cov}(N_\varepsilon(t_1,x_j),N_\varepsilon(t_1,x_l))=\textrm{Cov}(X,N_\varepsilon(t_1,x_l))=\sum_{k=-n_1+1}^{j-1}\eta^{(j)}_k \textrm{Cov}(N_\varepsilon(t_1,x_k),N_\varepsilon(t_1,x_l)).
\end{equation*}
Consider the vector ${\bf \eta}=\left(\eta^{(j)}_{-n_1+1},...,\eta^{(j)}_{j-1}\right)^T$ and vector
$${\bf x}=(\textrm{Cov}(N_\varepsilon(t_1,x_j),N_\varepsilon(t_1,x_{-n_1+1})),...,\textrm{Cov}(N_\varepsilon(t_1,x_j),N_\varepsilon(t_1,x_{j-1})))^T,$$
and let ${\bf \Sigma}=[\textrm{Cov}\left(N_\varepsilon(t_1,x_k),N_\varepsilon(t_1,x_l)\right)]_{-n_1+1\leq k,l\leq j-1}$ be the covariance matrix of $N_\varepsilon(t_1,\cdot)$, then we get
$${\bf \eta}={\bf \Sigma}^{-1}{\bf x}.$$
Let $\vert\vert\cdot\vert\vert_{1,1}$ be the operator norm on matrices, then
$$\vert\vert{\bf \eta}\vert\vert_{l^1}=\vert\vert{\bf \Sigma}^{-1}{\bf x}\vert\vert_{l^1}\leq\vert\vert{\bf \Sigma}^{-1}\vert\vert_{1,1}\vert\vert{\bf x}\vert\vert_{l^1}.$$
We rewrite ${\bf \Sigma}={\bf D}{\bf T}{\bf D}$, where ${\bf D}$ is a diagonal matrix with diagonal entries $\sqrt{\textrm{Var}(N_\varepsilon(t_1,x_k))}$, and ${\bf T}$ is the correlation matrix with entries
$$t_{kl}=\frac{\textrm{Cov}(N_\varepsilon(t_1,x_k),N_\varepsilon(t_1,x_l))}{\sqrt{\textrm{Var}(N_\varepsilon(t_1,x_k))}\sqrt{\textrm{Var}(N_\varepsilon(t_1,x_l))}}.$$
Thanks to Lemma \ref{varbound}, for $k\neq l$, $t_{kl}$ can be bounded above by
$$ \vert t_{kl}\vert\leq \frac{C_9c_0\varepsilon^4\left\vert (k-l)\varepsilon^2\right\vert^{-\gamma}}{C_7(c_0\varepsilon^4)^{\frac{2-\gamma}{2}}}=\frac{C_9}{C_7}\cdot\left(\frac{\sqrt{c_0}}{\vert k-l\vert}\right)^\gamma.$$
Define ${\bf A}={\bf I-T}$. Because ${\bf A}$ has zero diagonal entries, we can bound $\vert\vert{\bf A}\vert\vert_{1,1}$ by
\begin{align*}
\vert\vert{\bf A}\vert\vert_{1,1}&\leq 2\sum_{k-l=1}^{\left[\varepsilon^{-2}\right]}\vert t_{kl}\vert\leq \frac{2C_9\left(\sqrt{c_0}\right)^\gamma}{C_7}\int_0^{\varepsilon^{-2}}x^{-\gamma}dx\\
&=\frac{2C_9\left(\sqrt{c_0}\right)^\gamma}{C_7}\cdot \frac{\left(\varepsilon^2\right)^{\gamma-1}}{1-\gamma}=\frac{2C_9}{C_7(1-\gamma)}\cdot\frac{\left(c_0\varepsilon^4\right)^{\gamma/2}}{\varepsilon^2}.
\end{align*}
For any $\varepsilon>0$, $c_0$ is selected to satisfy
$$\vert\vert{\bf A}\vert\vert_{1,1}\leq\frac{2C_9}{C_7(1-\gamma)}\cdot\frac{(c_0\varepsilon^4)^{\gamma/2}}{\varepsilon^2}<\frac{2C_7}{2C_7+C_8},$$
which becomes
$$c_0<\left(\frac{C_7^2(1-\gamma)}{2C_7C_9+C_8C_9}\right)^{\frac{2}{\gamma}}\varepsilon^{4/\gamma-4}.$$
Let's denote $\mathcal{C}=\left(\frac{C_7^2(1-\gamma)}{2C_7C_9+C_8C_9}\right)^{\frac{2}{\gamma}}$ in \eqref{c0}. Choosing such $c_0$ will make
$$\vert\vert{\bf T}^{-1}\vert\vert_{1,1}=\vert\vert{\bf (I-A)}^{-1}\vert\vert_{1,1}\leq \frac{1}{1-\vert\vert{\bf A}\vert\vert_{1,1}}\leq \frac{2C_7+C_8}{C_8},$$
and $\vert\vert{\bf \Sigma}^{-1}\vert\vert_{1,1}\leq \vert\vert{\bf D}^{-1}\vert\vert_{1,1}\cdot\vert\vert{\bf T}^{-1}\vert\vert_{1,1}\cdot\vert\vert{\bf D}^{-1}\vert\vert_{1,1}\leq \frac{2C_7+C_8}{C_7C_8}(c_0\varepsilon^4)^{\frac{\gamma-2}{2}}$. By Lemma 5.1, $\vert\vert \textbf{x}\vert\vert_{l^1}$ can be bounded above by
\begin{equation*}
\vert\vert \textbf{x}\vert\vert_{l^1} \leq \sum_{k=-n_1+1}^{j-1} C_9c_0\varepsilon^4\left\vert (k-j)\varepsilon^2\right\vert^{-\gamma}.
\end{equation*}
As how we compute the upper bound for $\vert\vert{\bf A}\vert\vert_{1,1}$, we derive that 
\begin{equation*}
\begin{split}
\vert\vert{\bf \eta}\vert\vert_{l^1}&\leq \frac{2C_7+C_8}{C_7C_8}(c_0\varepsilon^4)^{\frac{\gamma-2}{2}}\vert\vert{\bf x}\vert\vert_{l^1}\leq\frac{2C_7+C_8}{C_8}\cdot\sum_{k=-n_1+1}^{j-1}\frac{C_9c_0\varepsilon^4\left\vert (k-j)\varepsilon^2\right\vert^{-\gamma}}{C_7(c_0\varepsilon^4)^{\frac{2-\gamma}{2}}}\\
&<\frac{2C_7+C_8}{C_8}\cdot\frac{1}{2}\cdot\frac{2C_7}{2C_7+C_8}=\frac{C_7}{C_8}.
\end{split}
\end{equation*}
\end{proof}

Finally, combining \eqref{sumprob}, \eqref{probB} and \eqref{probAbound} yields
\begin{equation*}
\begin{split}
P(F_{1})&\leq \frac{C_5}{(1\wedge \sqrt{{c_0}})\varepsilon^{2}}\exp\left(-\frac{C_6}{4\mathcal{D}^2t_1^{1-\gamma/2}}\right)+C\exp\left(-\frac{C'}{\varepsilon^2}\right)\\
&\leq C_5'\exp\left(-\frac{1}{2}\ln t_1-\frac{C_6}{4\mathcal{D}^2t_1^{1-\gamma/2}}\right)+C\exp\left(-\frac{C'}{\varepsilon^2}\right),
\end{split}
\end{equation*}
and we may choose a constant $\mathcal{D}_1(\gamma)>0$ such that for any $0<\mathcal{D}<\mathcal{D}_1$,
\begin{align*}
P(F_1)&\leq C_5'\exp\left(-\frac{C_6'}{\mathcal{D}^2t_1^{1-\gamma/2}}\right)+C\exp\left(-\frac{C'}{\varepsilon^2}\right)\\
&\leq C\exp\left(-\frac{C'}{\varepsilon^2+\mathcal{D}^2t_1^{1-\gamma/2}}\right).
\end{align*}
Since $\varepsilon^2>\mathcal{C}^{-\gamma/2}t_1^{\gamma/2}$ from \eqref{c0}, we finally attain
\begin{equation*}
P(F_1)\leq \textbf{C}_4\exp\left(-\frac{\textbf{C}_5}{t_1^{\gamma/2}}\right),
\end{equation*}
which completes the proof of \eqref{F1} as well as Proposition \ref{prop}(a).

\textbf{Proof of Proposition \ref{prop} (b)} We state the Gaussian correlation inequality, which is crucial to prove Proposition \ref{prop}.
\begin{lemma}\label{Gaussiancorr}For any convex symmetric sets $K, L$ in $\R$ and any centered Gaussian measure $\mu$ on $\R$, we have
$$
\mu(K\cap L)\geq \mu(K)\mu(L).
$$
\end{lemma}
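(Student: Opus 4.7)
Since the lemma as stated concerns convex symmetric subsets of $\R$, the one-dimensional case is all there is to check. Every convex symmetric subset of $\R$ is a symmetric interval $[-a,a]$ with $a \in [0,\infty]$. Writing $K=[-a,a]$ and $L=[-b,b]$ and assuming without loss of generality that $a \le b$, we have $K \cap L = K$, so $\mu(K \cap L) = \mu(K) \ge \mu(K)\mu(L)$, where the last inequality holds because $\mu(L) \le 1$. This disposes of the statement as written in a single line.

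The substantive content of the inequality — and the version that actually gets invoked in Proposition \ref{prop}(b), where one needs to factor a joint small-ball event over the finite grid $x_{-n_1+1},\dots,x_{n_1-1}$ into a product — is the multivariate Gaussian correlation inequality: $\mu(K \cap L) \ge \mu(K)\mu(L)$ for convex symmetric $K,L \subset \R^n$ and any centered Gaussian measure $\mu$ on $\R^n$. My plan is not to reprove this, but to invoke Royen's 2014 theorem as a black box. Royen's argument interpolates between the full centered Gaussian on $\R^{m+n}$ and the Gaussian whose cross-covariance block has been zeroed out, and uses an identity for Laplace transforms of quadratic forms of normal vectors to show that $\mu(K \times L)$ is monotone along the interpolation; the resulting monotonicity at the endpoints gives the inequality.

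At each subsequent use, one has only to verify that the sets involved are convex and symmetric. In the present paper they are always coordinate boxes of the form $\{ y \in \R^n : |y_j| \le r_j \text{ for all } j\}$, which is patently convex and symmetric, so no further work is required at the point of application. The only real obstacle, should one insist on a self-contained treatment, is Royen's theorem itself, for which no elementary proof is currently known; everything else in the lemma and its use is routine.
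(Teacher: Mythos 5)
Your proposal is correct and takes essentially the same route as the paper, which simply cites Royen's theorem (via \cite{royen2014simple} and \cite{latala2017royen}) without reproducing an argument. You additionally make the useful observation that the lemma as literally stated over $\R$ is trivial, since convex symmetric subsets of $\R$ are nested symmetric intervals so that $K\cap L$ equals the smaller of the two, and that the version actually invoked in the proof of Proposition~\ref{prop}(b) --- where a small-ball event over $[-1,1]$ is factored into a product over the $2/\varepsilon^2$ subintervals of length $\varepsilon^2$ --- is the multivariate Gaussian correlation inequality on $\R^n$; the ``$\R$'' in the lemma's hypothesis should presumably read ``$\R^n$.''
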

\begin{proof}
See in paper \cite{royen2014simple}, \cite{latala2017royen}. 
\end{proof}
By the Markov property of $u(t,\cdot)$ (see page 247 in \cite{da2014stochastic}), the behavior of $u(t,\cdot)$ in the interval $[t_n,t_{n+1}]$ depends only on $u(t_n,\cdot)$ and $\dot{F}(t,x)$ on $[t_n,t]\times[-1,1]$. Therefore, as the proof of Proposition \ref{prop} (a), it is enough to show that
\begin{equation}
\label{E0}
P\left(E_{0}\right)\geq  {\bf C_6}\exp\left(-\frac{{\bf C_7}}{t_1^{1-\gamma/2}}\right),
\end{equation}
where ${\bf C_6}$ and ${\bf C_7}$ do not depend on $u_0$ and $\vert u_0(x)\vert\leq \frac{\mathcal{C}_3}{3}t_1^{\frac{2-\gamma}{4}}$ for every $x\in\mathbb{T}$. 

We start with the Gaussian case that $\sigma(t,x,u)=\sigma(t,x)$, which does not depend on $u$. And we assume that $\sigma$ is a smooth and deterministic function. For $n \geq 0$, we define a sequence of events  
$$D_n=\left\lbrace\vert u(t_{n+1},x)\vert\leq \frac{\mathcal{C}_3}{6}t_1^{(2-\gamma)/4}~\text{and}~\vert u(t,x)\vert\leq\frac{2\mathcal{C}_3}{3}t_1^{(2-\gamma)/4}~\forall t\in[t_n,t_{n+1}],x\in[-1,1]\right\rbrace.$$
Denote 
$$G_t(u_0)(x)=G_t*u_0(x)=\int_{-1}^1G_t(x-y)u_0(y)dy,$$
and we have
\begin{equation}
\label{u0convolute}
\vert G_t(u_0)(x)\vert\leq \sup_{x}\vert u_0(x)\vert\leq \frac{\mathcal{C}_3}{3}t_1^{\frac{2-\gamma}{4}}.
\end{equation}

We consider the smooth and bounded function $\frac{G_t(u_0)(x)}{t_1\sigma(t,x)}$ on $[0,t_1]\times\mathbb{T}$. \cite{roncal2016fractional} provided a pointwise formula for the fractional Laplacian of a certain class of functions on $\mathbb{T}$, so that one may assume that there is a continuous function $f(t,x)$ on $[0,t_1]\times\mathbb{T}$ such as
\begin{equation}
\label{PtwFracLap}
\begin{split}
f(t,x)&=(-\Delta_{\mathbb{T}})^{\frac{1-\gamma}{2}}\left(\frac{G_t(u_0)(x)}{t_1\sigma(t,x)}\right)\\
&=\int_{-1}^1\left(\frac{G_t(u_0)(x)}{t_1\sigma(t,x)}-\frac{G_t(u_0)(y)}{t_1\sigma(t,y)}\right)K_{1-\gamma}(x-y)dy\\
&\leq 2\sup_{s,x}\left\vert\frac{G_t(u_0)(x)}{t_1\sigma(s,x)}\right\vert\int_{-1}^1K_{1-\gamma}(x-y)dy\\
&\leq C\sup_{s,x}\left\vert\frac{G_t(u_0)(x)}{t_1\sigma(s,x)}\right\vert,
\end{split}
\end{equation}
where $K_{1-\gamma}$ is the positive kernel function on $\mathbb{T}$ defined in \cite{roncal2016fractional}, Theorem 1.5.

On the other hand, $f(t,x)$ is defined as 
\begin{equation*}
\begin{split}
f(t,x)&=(-\Delta_{\mathbb{T}})^{\frac{1-\gamma}{2}}\left(\frac{G_t(u_0)(x)}{t_1\sigma(t,x)}\right)\\
&=\sum_{m=0}^\infty \vert m\vert^{1-\gamma}C_m\left(\frac{G_t(u_0)(x)}{t_1\sigma(t,x)}\right)\exp(m\pi ix),
\end{split}
\end{equation*}
where $C_m\left(\frac{G_t(u_0)(x)}{t_1\sigma(t,x)}\right)$ is the $m^{\text{th}}$ Fourier coefficient. Thus, convolving $f$ with $\Lambda$ on spatial variable yields
\begin{equation}
\label{Rieszconv}
\int_{-1}^{1}f(s,z)\Lambda(y-z)dz=\frac{CG_s(u_0)(y)}{t_1\sigma(s,y)}.
\end{equation}

Now for a probability measure $Q$ given by
$$\frac{dQ}{dP}=\exp\left(Z_{t_1}-\frac{1}{2}\langle Z\rangle_{t_1}\right),$$
where
\begin{equation*}
Z_{t_1}=-\int_0^{t_1}\int_{-1}^1f(s,y)F(dyds),
\end{equation*}
if $Z_{t_1}$ satisfies Novikov's condition in \cite{allouba1998different}, then for each fixed $T\in[0,\infty)$, $0\leq t\leq T$, and $\forall A\in\mathcal{B}(\mathbb{T})$, 
\begin{equation*}
\widetilde{F}_t(A):=F_t(A)+\int_{[0,t]\times A^{2}}f(s,y)\Lambda(x-y)dxdyds
\end{equation*}
is a centered spatially homogeneous Wiener process under the measure $Q$ (see \cite{allouba1998different} for more details). Moreover, for $x\in \mathbb{T}$, the covariance structure of $\dot{F}(t,x)$ provides the formal form of $\dot{\widetilde{F}}(t,x)$ as follows,
\begin{align*}
\dot{\widetilde{F}}(t,x)=\dot{F}(t,x)+\int_{-1}^1f(t,y)\Lambda(x-y)dy,
\end{align*}
which is a spatially homogeneous noise under measure $Q$. 

We now check whether $Z_{t_1}$ satisfies Novikov's condition. When $f$ is a deterministic function defined in \eqref{PtwFracLap}, it is equivalent to verify that
\begin{equation}
\label{novi}
\int_0^{t_1}\int_{-1}^1\int_{-1}^1f(s,y)f(s,z)\Lambda(y-z)dzdyds<+\infty.
\end{equation}
Given \eqref{u0convolute}, \eqref{PtwFracLap} and \eqref{Rieszconv}, we derive
\begin{equation}
\label{novikov}
\begin{split}
&\int_0^{t_1}\int_{-1}^1\int_{-1}^1f(s,y)f(s,z)\Lambda(y-z)dzdyds=\int_0^{t_1}\int_{-1}^1f(s,y)\frac{G_t(u_0)(y)}{t_1\sigma(s,y)}dyds\\
&\leq\int_0^{t_1}\int_{-1}^1\vert f(s,y)\vert\left(\frac{\mathcal{C}_3}{3}t_1^{\frac{2-\gamma}{4}}\right)\cdot\frac{1}{t_1\mathcal{C}_1}dyds=\frac{C(\mathcal{C}_1,\mathcal{C}_3)}{t_1^{\frac{2+\gamma}{4}}}\int_0^{t_1}\int_{-1}^1\vert f(s,y)\vert dyds\\
&\leq\frac{C(\gamma,\mathcal{C}_1,\mathcal{C}_3)}{t_1^{\frac{2+\gamma}{4}}}\int_0^{t_1}\sup_{s,x}\left\vert\frac{G_t(u_0)(x)}{t_1\sigma(s,x)}\right\vert ds\leq C(\gamma,\mathcal{C}_1,\mathcal{C}_3)t_1^{-\gamma/2}<+\infty,
\end{split}
\end{equation}
which satisfies \eqref{novi}. Thus, we can rewrite equation $(1.1)$ with deterministic $\sigma$ as
\begin{align*}
u(t,x)&=G_t(u_0)(x)+\int_0^t\int_{-1}^1G_{t-s}(x-y)\sigma(s,y)\left[\widetilde{F}(dyds)-\frac{G_s(u_0)(y)}{t_1\sigma(s,y)}dyds\right]\\
&=G_t(u_0)(x)-\frac{G_t(u_0)(x)}{t_1}+\int_0^t\int_{-1}^1G_{t-s}(x-y)\sigma(s,y)\widetilde{F}(dyds)\\
&=\left(1-\frac{t}{t_1}\right)G_t(u_0)(x)+\int_0^t\int_{-1}^1G_{t-s}(x-y)\sigma(s,y)\widetilde{F}(dyds).
\end{align*}
The first term is $0$ at $t_1$, and since $\vert u_0(x)\vert\leq \frac{\mathcal{C}_3}{3}t_1^{\frac{2-\gamma}{4}}$, we have
$$\left|\left(1-\frac{t}{t_1}\right)G_t(u_0)(x)\right|\leq\frac{\mathcal{C}_3}{3}t_1^{\frac{2-\gamma}{4}}, \forall x\in[-1,1],t<t_1.$$
We define
$$\widetilde{N}(t,x)=\int_0^t\int_{-1}^1G_{t-s}(x-y)\sigma(s,y)\widetilde{F}(dyds).$$
$\widetilde{F}$ may not have the same covariance as $F$ under the probability measure $P$; however, under probability measure $Q$, we can apply Lemma \ref{larged} to $\widetilde{F}$ and derive
\begin{equation*}
\begin{split}
Q\left(\sup_{\substack{0\leq t\leq c_0\varepsilon^4\\ x\in\left[0,c_0\varepsilon^2\right]}}\left\vert \widetilde{N}(t,x)\right\vert>\frac{\mathcal{C}_3}{6}t_1^{(2-\gamma)/4}\right)&=Q\left(\sup_{\substack{0\leq t\leq (c_0)^{-1}(\sqrt{c_0}\varepsilon)^4\\ x\in\left[0,(\sqrt{c_0}\varepsilon)^2\right]}}\left\vert \widetilde{N}(t,x)\right\vert>\frac{\mathcal{C}_3}{6}t_1^{(2-\gamma)/4}\right)\\
&\leq C_5\exp\left(-\frac{\mathcal{C}_3^2C_6}{36\mathcal{C}_2^2}\right)<1,
\end{split}
\end{equation*}
where $\beta = c_0^{-1}>1$ and $\lambda = \frac{c_0^{(\gamma-2)/4}}{6}$ in Lemma 4.4. The last inequality is derived from the definition of $\mathcal{C}_3$ in \eqref{c3}. By the Gaussian correlation inequality (Lemma \ref{Gaussiancorr}), we obtain
\begin{align*}
Q\left(\sup_{\substack{0\leq t\leq t_1\\ x\in[-1,1]}}\left\vert \widetilde{N}(t,x)\right\vert\leq\frac{\mathcal{C}_3}{6}t_1^{(2-\gamma)/4}\right)&\geq Q\left(\sup_{\substack{0\leq t\leq t_1\\ x\in\left[0,c_0\varepsilon^2\right]}}\left\vert \widetilde{N}(t,x)\right\vert\leq\frac{\mathcal{C}_3}{6}t_1^{(2-\gamma)/4}\right)^{\frac{2}{c_0\varepsilon^2}}\\
&\geq \left[1-C_5\exp\left(-\frac{\mathcal{C}_3^2C_6}{36\mathcal{C}_2^2}\right)\right]^{\frac{2}{c_0\varepsilon^2}}.
\end{align*}
Observe that
$$Q(D_0)\geq Q\left(\sup_{\substack{0\leq t\leq t_1\\ x\in[-1,1]}}\left\vert \widetilde{N}(t,x)\right\vert\leq\frac{\mathcal{C}_3}{6}t_1^{(2-\gamma)/4}\right),$$
and if we replace $f(s,y)$ with $2f(s,y)$ for $Z_{t_1}$,
\begin{equation}
\label{radon}
1=\E\left[\frac{dQ}{dP}\right]=\E\left[\exp\left(Z_{t_1}-\frac{1}{2}\langle Z\rangle_{t_1}\right)\right]=\E[\exp\left(2Z_{t_1}-2\langle Z\rangle_{t_1}\right)].
\end{equation}
Because $f(s,y)$ is deterministic, we may estimate the Radon-Nikodym derivative as follow,
\begin{align*}
\E\left[\left(\frac{dQ}{dP}\right)^2\right]&=\E[\exp\left(2Z_{t_1}-\langle Z\rangle_{t_1}\right)]=\E[\exp\left(2Z_{t_1}-2\langle Z\rangle_{t_1}\right)\cdot\exp(\langle Z\rangle_{t_1})]\\
&\leq\exp\left(\frac{C}{t_1^{\gamma/2}\mathcal{C}_1^2}\right).
\end{align*}
The last inequality follows from \eqref{novikov} and \eqref{radon}. The Cauchy-Schwarz inequality implies
$$Q(D_0)\leq \sqrt{\E\left[\left(\frac{dQ}{dP}\right)^2\right]}\cdot\sqrt{P(D_0)},$$
and as a consequence, we get
\begin{equation}
\begin{split}
\label{probD}
P(D_0)&\geq \exp\left(-\frac{C}{t_1^{\gamma/2}\mathcal{C}_1^2}\right)\exp\left(\frac{4}{c_0\varepsilon^2}\ln \left[1-C_5\exp\left(-\frac{\mathcal{C}_3^2C_6}{36\mathcal{C}_2^2}\right)\right]\right)\\
&\geq \exp\left(-\frac{C}{t_1^{\gamma/2}\mathcal{C}_1^2}-\frac{C'}{c_0\varepsilon^2}\right)\\
&\geq C\exp\left(-\frac{C'}{t_1^{1-\gamma/2}}\right).
\end{split}
\end{equation}
Recall from \eqref{c0} that $c_0\varepsilon^2 =\mathcal{C}_0^{\gamma/2}t_1^{1-\gamma/2}$, so the term $\frac{C'}{c_0\varepsilon^2}$ will dominate the term $\frac{C}{t_1^{\gamma/2}}$ as $\varepsilon$ approaching 0, which leads to the last inequality.

For the lower bound with a non-deterministic $\sigma(t,x,u)$, we write
$$u(t,x)=u_g(t,x)+M(t,x),$$
where $u_g(t,x)$ satisfies the equation
$$\partial_tu_g=\partial^2_xu_g+\sigma(t,x,u_0(x))\dot{F}(t,x),$$
and 
$$M(t,x)=\int_0^t\int_{-1}^1G_{t-s}(x-y)[\sigma(s,y,u(s,y))-\sigma(s,y,u_0(y))]F(dyds)$$
with an initial profile $u_0$. Since $u_g$ is Gaussian, for an event defined as
$$\widetilde{D}_0=\left\lbrace\vert u_g(t_{1},x)\vert\leq \frac{\mathcal{C}_3}{6}t_1^{(2-\gamma)/4}~\text{and}~\vert u_g(t,x)\vert\leq\frac{2\mathcal{C}_3}{3}t_1^{(2-\gamma)/4}~\forall t\in[0,t_{1}],x\in[-1,1]\right\rbrace,$$
we can apply \eqref{probD} to it and get
\begin{equation}
\label{probD0}
P\left(\widetilde{D}_0\right)\geq C\exp\left(-\frac{C'}{t_1^{1-\gamma/2}}\right).
\end{equation}

Define the stopping time 
$$\tau=\inf\left\lbrace t>0:\vert u(t,x)-u_0(x)\vert>2\mathcal{C}_3t_1^{(2-\gamma)/4}\text{~for some $x\in[-1,1]$}\right\rbrace,$$
and if the set is empty, we set $\tau=\infty$. Clearly we have $\tau>t_1$ on the event $E_{0}$ in \eqref{En} since $\vert u_0(x)\vert\leq \frac{\mathcal{C}_3}{3}t_1^{\frac{2-\gamma}{4}}$, and $\vert u(t,x)\vert\leq \mathcal{C}_3t_1^{(2-\gamma)/4}$ for $\forall t\in[0,t_1]$ on the event $E_{0}$. We make another definition
$$\widetilde{M}(t,x)=\int_0^t\int_{-1}^1G_{t-s}(x-y)[\sigma(s,y,u(s\wedge \tau,y))-\sigma(s,y,u_0(y))]F(dyds),$$
and $M(t,x)=\widetilde{M}(t,x)$ for $t\leq t_1$ on the event $\{\tau>t_1\}$. Moreover, we have
\begin{equation}
\label{E0bound}
\begin{split}
P(E_{0})&\geq P\left(\widetilde{D}_0\bigcap \left\lbrace \sup_{\substack{0\leq t\leq t_1\\ x\in[-1,1]}}\vert M(t,x)\vert\leq\frac{\mathcal{C}_3}{6}t_1^{\frac{2-\gamma}{4}}\right\rbrace\right)\\
&=P\left(\left(\widetilde{D}_0\bigcap \left\lbrace \sup_{\substack{0\leq t\leq t_1\\ x\in[-1,1]}}\vert M(t,x)\vert\leq\frac{\mathcal{C}_3}{6}t_1^{\frac{2-\gamma}{4}}\right\rbrace\bigcap\{\tau>t_1\}\right)\right.\\
&\hspace{3cm}\bigcup\left.\left(\widetilde{D}_0\bigcap \left\lbrace \sup_{\substack{0\leq t\leq t_1\\ x\in[-1,1]}}\vert M(t,x)\vert\leq\frac{\mathcal{C}_3}{6}t_1^{\frac{2-\gamma}{4}}\right\rbrace\bigcap\{\tau\leq t_1\}\right)\right).
\end{split}
\end{equation}
On the event $\{\tau>t_1\}$, we have
$$\sup_{\substack{0\leq t\leq t_1\\ x\in[-1,1]}}\vert M(t,x)\vert=\sup_{\substack{0\leq t\leq t_1\\ x\in[-1,1]}}\vert \widetilde{M}(t,x)\vert,$$
and on the event $\widetilde{D}_0\cap\{\tau\leq t_1\}$,
$$\vert u_g(\tau,x)\vert\leq \frac{2\mathcal{C}_3}{3}t_1^{\frac{2-\gamma}{4}}\quad\text{and}\quad \vert u_0(x)\vert\leq\frac{\mathcal{C}_3}{3}t_1^{\frac{2-\gamma}{4}}\quad\text{for all}\quad x\in\mathbb{T},$$
$$\vert u(\tau,x)-u_0(x)\vert\geq 2\mathcal{C}_3t_1^{\frac{2-\gamma}{4}}\quad\text{for some}\quad x\in\mathbb{T}.$$
The above inequalities lead to
\begin{align*}
\sup_{\substack{x}}\vert M(\tau,x)\vert&=\sup_{\substack{x}}\vert u(\tau,x)-u_g(\tau,x)\vert\geq \sup_{\substack{x}}(\vert u(\tau,x)\vert-\vert u_g(\tau,x)\vert)\\
&\geq \sup_{\substack{x}}\vert u(\tau,x)\vert-\frac{2\mathcal{C}_3}{3}t_1^{\frac{2-\gamma}{4}}\geq 2\mathcal{C}_3t_1^{\frac{2-\gamma}{4}}-\frac{\mathcal{C}_3}{3}t_1^{\frac{2-\gamma}{4}}-\frac{2\mathcal{C}_3}{3}t_1^{\frac{2-\gamma}{4}}\\
&> \frac{\mathcal{C}_3}{6}t_1^{\frac{2-\gamma}{4}},
\end{align*}
which implies
$$\widetilde{D}_0\cap \left\lbrace \sup_{\substack{0\leq t\leq t_1\\ x\in[-1,1]}}\vert M(t,x)\vert\leq\frac{\mathcal{C}_3}{6}t_1^{\frac{2-\gamma}{4}}\right\rbrace\cap\{\tau\leq t_1\}=\phi.$$

Combining the above with \eqref{E0bound} yields
\begin{equation}
\label{E0boundsim}
\begin{split}
P(E_{0})&\geq P\left(\widetilde{D}_0\cap \left\lbrace \sup_{\substack{0\leq t\leq t_1\\ x\in[-1,1]}}\vert \widetilde{M}(t,x)\vert\leq\frac{\mathcal{C}_3}{6}t_1^{\frac{2-\gamma}{4}}\right\rbrace\right)\\
&\geq P\left(\widetilde{D}_0\right)-P\left( \sup_{\substack{0\leq t\leq t_1\\ x\in[-1,1]}}\vert \widetilde{M}(t,x)\vert>\frac{\mathcal{C}_3}{6}t_1^{\frac{2-\gamma}{4}}\right),
\end{split}
\end{equation}
and as a union bound calculated in the equation \eqref{probB}, we obtain
\begin{equation}
\label{probDtilde}
P\left( \sup_{\substack{0\leq t\leq t_1\\ x\in[-1,1]}}\left\vert \widetilde{M}(t,x)\right\vert>\frac{\mathcal{C}_3}{6}t_1^{\frac{2-\gamma}{4}}\right)\leq \frac{C_5}{(1\wedge\sqrt{c_0})\varepsilon^2}\exp\left(-\frac{C_6\mathcal{C}_3^2}{144\mathcal{D}^2 t_1^{1-\gamma/2}}\right).
\end{equation}
Consequently, from \eqref{probD0}, \eqref{E0boundsim} and \eqref{probDtilde}, we conclude that there exists a constant $\mathcal{D}_2(\gamma)>0$ such that for any $\mathcal{D}<\mathcal{D}_2$,
\begin{equation}
\begin{split}
\label{E_0}
P(E_{0})&\geq C\exp\left(-\frac{C'}{t_1^{1-\gamma/2}}\right)-C_5'\exp\left(-\frac{C_6'}{\mathcal{D}^2t_1^{1-\gamma/2}}\right)\\
&\geq \textbf{C}_6\exp\left(-\frac{\textbf{C}_7}{t_1^{1-\gamma/2}}\right),
\end{split}
\end{equation}
which completes the proof of \eqref{E0} as well as Proposition \ref{prop}(b).

\section{Acknowledgment}
The author would like to thank his advisor Professor Carl Mueller and the anonymous referees for constructive comments.

\bibliographystyle{alpha}
\bibliography{mybib}

\end{document}